\DeclareMathOperator{\tr}{tr}
\DeclareMathOperator{\dvol}{dvol}
\DeclareMathOperator{\Ric}{Ric}
\DeclareMathOperator{\Rm}{Rm}
\newcommand{\cv}{\widetilde{v}}
\newcommand{\cB}{\widetilde{B}}
\newcommand{\cE}{\widetilde{E}}
\newcommand{\cR}{\widetilde{R}}
\newcommand{\cT}{\widetilde{T}}
\newcommand{\csigma}{\widetilde{\sigma}}
\newcommand{\hsigma}{\widehat{\sigma}}
\newcommand{\lp}{\langle}
\newcommand{\rp}{\rangle}
\newcommand{\lv}{\lvert}
\newcommand{\rv}{\rvert}
\newcommand{\eps}{\varepsilon}
\newcommand{\mC}{\mathcal{C}}
\newcommand{\mF}{\mathcal{F}}
\newcommand{\mV}{\mathcal{V}}
\newcommand{\mW}{\mathcal{W}}
\newcommand{\bN}{\mathbb{N}}
\newcommand{\bR}{\mathbb{R}}
\newcommand{\cRic}{\widetilde{\Ric}}
\def\sideremark#1{\ifvmode\leavevmode\fi\vadjust{\vbox to0pt{\vss
 \hbox to 0pt{\hskip\hsize\hskip1em
 \vbox{\hsize3cm\tiny\raggedright\pretolerance10000
 \noindent #1\hfill}\hss}\vbox to8pt{\vfil}\vss}}}
\newcommand{\suchthat}{\mathrel{}\middle|\mathrel{}}
\newcommand{\comment}[1]{}
\newtheorem{thm}{Theorem}[section]
\newtheorem{lem}[thm]{Lemma}
\newtheorem{cor}[thm]{Corollary}
\theoremstyle{definition}
\theoremstyle{remark}
\numberwithin{equation}{section}
\begin{document}

\title{A weighted renormalized curvature for manifolds with density}
\author{Jeffrey S. Case}
\address{109 McAllister Building \\ Penn State University \\ University Park, PA 16802}
\email{jscase@psu.edu}
\keywords{smooth metric measure space, manifold with density, renormalized volume, gradient Ricci soliton, $\mW$-functional}
\subjclass[2010]{Primary 53C21; Secondary 53C25, 58E11}
\begin{abstract}
We introduce a scalar invariant on manifolds with density which is analogous to the renormalized volume coefficient $v_3$ in conformal geometry.  We show that this invariant is variational and that shrinking gradient Ricci solitons are stable with respect to the associated $\mW$-functional.
\end{abstract}
\maketitle

\section{Introduction}
\label{sec:intro}

The $\sigma_k$-curvatures of a Riemannian manifold $(M^n,g)$ are defined as the $k$-th elementary symmetric functions of the eigenvalues of the Schouten tensor.  These invariants have been heavily studied in conformal geometry, in part because of their variational properties, especially as related to Einstein metrics~\cite{BransonGover2008,Viaclovsky2000}.  In this respect, Chang, Fang and Graham observed~\cite{ChangFang2008,ChangFangGraham2012,Graham2009} that the renormalized volume coefficients $v_k$~\cite{Graham2000}, which agree with the $\sigma_k$-curvature when the latter is variational or when the metric is Einstein~\cite{Graham2009,GrahamJuhl2007}, have even better properties: the renormalized volume coefficients are always variational~\cite{ChangFang2008,Graham2000}; have highest order term equal to the $\sigma_k$-curvature~\cite{Graham2009}; and Einstein metrics with nonzero scalar curvature are stable with respect to $\mF_k$~\cite{ChangFangGraham2012,GuoLi2011}.

Motivated by the question of finding invariants with similar properties with respect to gradient Ricci solitons, the author introduced the notion of the weighted $\sigma_k$-curvatures on a manifold with density~\cite{Case2014sd}.  A manifold with density is a triple $(M^n,g,e^{-\phi}\dvol)$ consisting of a Riemannian manifold and a smooth measure; such a space is a gradient Ricci soliton if $\Ric+\nabla^2\phi=\lambda g$ for some $\lambda\in\bR$.  The weighted $\sigma_k$-curvatures $\csigma_{k,\phi}$ are defined for all $k\in\bN$ and $\lambda\in\bR$.  These invariants are especially useful for studying gradient Ricci solitons with $\Ric+\nabla^2\phi=\lambda g$.  The first three such invariants are
\begin{align*}
 \csigma_{1,\phi} & := \frac{1}{2}\left(R + 2\Delta\phi - \lv\nabla\phi\rv^2 + 2\lambda(\phi-n)\right), \\
 \csigma_{2,\phi} & := \frac{1}{2}\left(\left(\csigma_{1,\phi}\right)^2 - \lv\cRic_\phi\rv^2\right), \\
 \csigma_{3,\phi} & := \frac{1}{6}\left(\left(\csigma_{1,\phi}\right)^3 - 3\csigma_{1,\phi}\lv\cRic_\phi\rv^2 + 2\tr\bigl(\cRic_\phi\bigr)^3\right),
\end{align*}
where $\cRic_\phi=\Ric+\nabla^2\phi-\lambda g$ and $\bigl(\cRic_\phi\bigr)^3$ is the three-fold composition of $\cRic_\phi$ with itself when regarded as an endomorphism of $TM$ via the metric.  Analogous to the Riemannian setting, the weighted $\sigma_k$-curvatures are variational if and only if $k\in\{1,2\}$ or $g$ is flat, and non-steady gradient Ricci solitons are stable with respect to the corresponding functionals in these cases; see~\cite{Case2014sd} for details.  One would like to remove the restriction that $g$ be flat by introducing a family of ``weighted renormalized volume coefficients'' on manifolds with density which are variational and recover the weighted $\sigma_k$-curvatures for flat metrics.

The purpose of this note is to provide evidence that such a family exists.  Specifically, we show that the \emph{(third) renormalized volume coefficient}
\[ \cv_{3,\phi} := \csigma_{3,\phi} + \frac{1}{3}\lp\cB_\phi,\cRic_\phi\rp \]
is the correct weighted analogue of the corresponding Riemannian invariant $v_3$.  Here $\cB_\phi$ is the weighted Bach tensor
\begin{equation}
 \label{eqn:weighted_bach}
 \cB_\phi := \delta_\phi d\cRic_\phi + \Rm\cdot\cRic_\phi ;
\end{equation}
see Section~\ref{sec:bg} for an explanation of our notation.  Note that if $g$ is flat or $\cRic_\phi=0$, then $\cB_\phi=0$.  In particular, $\cv_{3,\phi}=\csigma_{3,\phi}$ on flat manifolds and gradient Ricci solitons.  For this reason, and in analogy with the Riemannian situation (cf.\ \cite{Graham2009}), we define $\cv_{1,\phi}:=\csigma_{1,\phi}$ and $\cv_{2,\phi}:=\csigma_{2,\phi}$ as the first and second renormalized volume coefficients.  It is easy to check that
\[ \cB_\phi = 2d\Ric(\cdot,\nabla\phi,\cdot) + \Rm\cdot\left(\Ric+d\phi\otimes d\phi\right) + \delta d\Ric - \lambda\Ric , \]
and hence, for $(g,\tau)$ fixed, $\cv_{k,\phi}$ depends only on the two-jet of $\phi$ for all $k\in\{1,2,3\}$.

The variational properties of $\cv_{3,\phi}$ can be understood in terms of the $\mW_3$-functional
\begin{equation}
 \label{eqn:W3}
 \mW_3(g,\phi,\tau) := \int_M \left[ \tau^3\cv_{3,\phi} + \frac{\tau}{8}\cv_{1,\phi} \right] (4\pi\tau)^{-\frac{n}{2}}e^{-\phi}\dvol_g
\end{equation}
defined on a compact manifold $M^n$, where $\cv_{3,\phi}$ and $\cv_{1,\phi}$ are defined in terms of the manifold with density $(M^n,g,e^{-\phi}\dvol)$ and the parameter $\lambda=1/(2\tau)>0$.  It is clear that the $\mW_3$-functional is scale-invariant: $\mW_3(cg,\phi,c\tau)=\mW_3(g,\phi,\tau)$ for all $c>0$.  We are interested in the first and second variation of this functional in the scale-invariant classes
\begin{align*}
 \mC_1(g,\tau) & := \left\{ \phi\in C^\infty(M) \suchthat \int_M (4\pi\tau)^{-\frac{n}{2}}e^{-\phi}\dvol_g = 1 \right\} , \\
 \mC_1(g) & := \left\{ (\phi,\tau)\in C^\infty(M)\times(0,\infty) \suchthat \phi\in\mC_1(g,\tau) \right\} .
\end{align*}

Our first result is a characterization of the critical points of $\mW_3\colon\mC_1(g,\tau)\to\bR$; in particular, this shows that $\cv_{3,\phi}$ is variational.

\begin{thm}
 \label{thm:variational_theorem}
 Let $(M^n,g)$ be a Riemannian manifold and fix $\tau>0$.  Then $\phi$ is a critical point of $\mW_3\colon\mC_1(g,\tau)\to\bR$ if and only if there is a constant $c\in\bR$ such that
 \begin{equation}
  \label{eqn:variational_theorem}
  \cv_{3,\phi} - \frac{1}{2\tau}\cv_{2,\phi} + \frac{1}{8\tau^2}\cv_{1,\phi} = c .
 \end{equation}
\end{thm}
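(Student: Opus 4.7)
The plan is to compute the first variation of $\mW_3$ at $\phi$ in a direction $\psi\in C^\infty(M)$ and then identify critical points subject to the unit-mass constraint via a Lagrange multiplier. Since the linearization of $\phi+s\psi\in\mC_1(g,\tau)$ is $\int_M\psi(4\pi\tau)^{-n/2}e^{-\phi}\dvol_g=0$, being a constrained critical point of $\mW_3$ is equivalent to the pointwise Euler--Lagrange density of $\mW_3$ being constant.

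The heart of the argument is the pair of weighted variational identities
\begin{align*}
 \int_M(\delta\cv_{1,\phi})\,e^{-\phi}\dvol_g &= \lambda\int_M\psi\,e^{-\phi}\dvol_g, \\
 \int_M(\delta\cv_{3,\phi})\,e^{-\phi}\dvol_g &= \lambda\int_M\cv_{2,\phi}\,\psi\,e^{-\phi}\dvol_g,
\end{align*}
with $\lambda=1/(2\tau)$. The first follows immediately from $\delta\csigma_{1,\phi}=\Delta_\phi\psi+\lambda\psi$ and the self-adjointness of the drift Laplacian $\Delta_\phi$ against $e^{-\phi}\dvol_g$ on a closed manifold. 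For the second, I would use $\delta\cRic_\phi=\nabla^2\psi$ and expand the cubic $\csigma_{3,\phi}$ to obtain
\[
 \delta\csigma_{3,\phi} = \csigma_{2,\phi}(\Delta_\phi\psi+\lambda\psi) - \langle T_{2,\phi},\nabla^2\psi\rangle,
\]
where $T_{2,\phi}:=\csigma_{1,\phi}\cRic_\phi-(\cRic_\phi)^2$ is the weighted second Newton tensor. Integrating the Hessian term against $e^{-\phi}\dvol_g$ generates a weighted double divergence of $T_{2,\phi}$; the correction $\tfrac{1}{3}\langle\cB_\phi,\cRic_\phi\rangle$ in the definition of $\cv_{3,\phi}$ is tailored precisely so that its variation cancels these divergence contributions via a twice-contracted weighted Bianchi identity, leaving only the clean term $\lambda\cv_{2,\phi}\psi$ on the right.

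Granting these identities, a direct assembly gives
\[
 \delta\mW_3 = \int_M\psi\bigl[\tau^3(\lambda\cv_{2,\phi}-\cv_{3,\phi}) + \tfrac{\tau}{8}(\lambda-\cv_{1,\phi})\bigr](4\pi\tau)^{-\frac{n}{2}}e^{-\phi}\dvol_g.
\]
Using $\tau\lambda=\tfrac12$ and $\tfrac{\tau^3\lambda^2}{2}=\tfrac{\tau}{8}$, the bracket equals $-\tau^3\bigl[\cv_{3,\phi}-\tfrac{1}{2\tau}\cv_{2,\phi}+\tfrac{1}{8\tau^2}\cv_{1,\phi}\bigr]+\tfrac{1}{16}$. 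Absorbing the additive constant into the Lagrange multiplier for the constraint $\int f\dvol_g=1$, we conclude that $\phi$ is critical if and only if \eqref{eqn:variational_theorem} holds. The main obstacle is establishing the $\cv_{3,\phi}$ identity: since $\cB_\phi$ contains both the second-order piece $\delta_\phi d\cRic_\phi$ and the algebraic piece $\Rm\cdot\cRic_\phi$, varying it introduces both triple-derivative terms in $\psi$ and curvature contractions with $\nabla^2\psi$, and one must verify that these combine with the terms produced by integrating $\langle T_{2,\phi},\nabla^2\psi\rangle$ by parts—using the weighted Bianchi identity and Ricci commutators—to exactly $\lambda\cv_{2,\phi}\,\psi$, with every derivative on $\psi$ cancelling.
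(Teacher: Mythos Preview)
Your strategy is the same as the paper's, and your integrated identities and final assembly are correct: the paper arrives at exactly
\[
\left.\frac{d}{ds}\right|_{s=0}\mW_3 = -\int_M \left[\tau^3\cv_{3,\phi}-\frac{\tau^2}{2}\cv_{2,\phi}+\frac{\tau}{8}\cv_{1,\phi}-\frac{1}{16}\right]\psi\,d\nu,
\]
from which the Lagrange-multiplier argument is immediate.

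The difference is in how the ``main obstacle'' you flag is resolved. Rather than integrating $\langle T_{2,\phi},\nabla^2\psi\rangle$ by parts and chasing commutators, the paper establishes a sharper \emph{pointwise} divergence identity. Two clean facts make the cancellation transparent. First, since $\delta_\phi\Rm=d\cRic_\phi$, the variation of the weighted Bach tensor is simply
\[
(\cB_\phi)' = 2\,d\cRic_\phi(\cdot,\nabla\psi,\cdot),
\]
so $\bigl(\langle\cB_\phi,\cRic_\phi\rangle\bigr)' = 2\,(d\cRic_\phi\cdot\cRic_\phi)(\nabla\psi)+\langle\cB_\phi,\nabla^2\psi\rangle$. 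Second, the weighted divergence of $\cB_\phi$ is
\[
\delta_\phi\cB_\phi = -\,d\cRic_\phi\cdot\cRic_\phi .
\]
Combining these with the known variation of $\csigma_{3,\phi}$ (which, in the paper's conventions, reads $(\tau^3\csigma_{3,\phi})'=\tau^3\delta_\phi(\cT_{2,\phi}(\nabla\psi))+\frac{\tau^2}{2}\cv_{2,\phi}\psi-\tau^3(d\cRic_\phi\cdot\cRic_\phi)(\nabla\psi)$) yields the local formula
\[
(\tau^3\cv_{3,\phi})' = \tau^3\,\delta_\phi\!\left(\bigl(\cT_{2,\phi}+\tfrac{1}{3}\cB_\phi\bigr)(\nabla\psi)\right) + \frac{\tau^2}{2}\cv_{2,\phi}\,\psi .
\]
This is strictly stronger than your integrated identity and makes the $\frac{1}{3}$ coefficient visibly forced. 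Once you isolate these two facts, the ``triple-derivative terms and curvature contractions'' you worry about never have to be unpacked individually.
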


Using~\cite[Proposition~5.2 and Proposition~5.7]{Case2014sd}, it is easy to check that the critical points of $\mW_3+\frac{1}{2}\mW_2+\frac{1}{8}\mW_1$ in $\mC_1(g,\tau)$ are such that $\cv_{3,\phi}$ is constant, hence $\cv_{3,\phi}$ is variational.

Our second result shows that shrinking gradient Ricci solitons are stable with respect to $\mW_3\colon\mC_1(g)\to\bR$.  To that end, we say that $(M^n,g,e^{-\phi}\dvol)$ is a \emph{volume-normalized shrinking gradient Ricci soliton} if there is a constant $\tau>0$ such that $\Ric+\nabla^2\phi=\frac{1}{2\tau}g$ and $(\phi,\tau)\in\mC_1(g)$.  Note that if $\Ric+\nabla^2\phi=\frac{1}{2\tau}g>0$, then $e^{-\phi}\dvol$ is a finite measure~\cite{Wylie2008} and hence, by adding a constant to $\phi$ if necessary, we can ensure that $\phi\in\mC_1(g,\tau)$.  To state our result, recall that if $(\phi,\tau)$ is a critical point of $\mW_3\colon\mC_1(g)\to\bR$, then we may define the quadratic form $\mW_3^{\prime\prime}$ on
\[ T_{(\phi,\tau)}\mC_1(g) = \left\{ (\psi,t)\in C^\infty(M)\times\bR \suchthat \int_M \left(\psi+\frac{nt}{2\tau}\right)\,(4\pi\tau)^{-\frac{n}{2}}e^{-\phi}\dvol = 0 \right\} . \]
by $\mW_3^{\prime\prime}(v):=\left(\mW_3(\gamma(s))\right)^{\prime\prime}(0)$ for $\gamma\colon(-\eps,\eps)\to\mC_1(g)$ a smooth path with $\gamma^\prime(0)=v$.

\begin{thm}
 \label{thm:stability_theorem}
 Let $(M^n,g,e^{-\phi}\dvol)$ be a volume-normalized shrinking gradient Ricci soliton.  Then $(\phi,\tau)$ is a critical point of $\mW_3\colon\mC_1(g)\to\bR$ and $\mW_3^{\prime\prime}$ is a positive semi-definite quadratic form on $T_{(\phi,\tau)}\mC_1(g)$.  Moreover, either
 \begin{enumerate}
  \item the null space of $\mW_3^{\prime\prime}$ is $(n+1)$-dimensional, in which case $(M^n,g,e^{-\phi}\dvol)$ is isometric to an $n$-dimensional Gaussian;
  \item the null space of $\mW_3^{\prime\prime}$ is $k$-dimensional, $1\leq k\leq n-1$, in which case $(M^n,g,e^{-\phi}\dvol)$ factors through a $k$-dimensional Gaussian; or
  \item $\mW_3^{\prime\prime}$ is positive definite.
 \end{enumerate}
\end{thm}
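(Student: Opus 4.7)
The plan splits into verifying that $(\phi,\tau)$ is a critical point of $\mW_3$ on $\mC_1(g)$ and then analyzing the Hessian $\mW_3''$ there. For the critical point, note that on a shrinking gradient Ricci soliton one has $\cRic_\phi = 0$, hence by \eqref{eqn:weighted_bach} also $\cB_\phi = 0$, and so $\cv_{3,\phi} = \csigma_{3,\phi}$. The explicit formulas then collapse to $\csigma_{2,\phi} = \tfrac{1}{2}(\csigma_{1,\phi})^2$ and $\csigma_{3,\phi} = \tfrac{1}{6}(\csigma_{1,\phi})^3$, and a Hamilton-type identity forces $\csigma_{1,\phi}$ to be constant. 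Thus \eqref{eqn:variational_theorem} is automatically satisfied, giving the Euler--Lagrange equation for $\phi$ at fixed $\tau$ via Theorem~\ref{thm:variational_theorem}. For the $\tau$-direction, a direct computation of $\partial_\tau \mW_3$ using the constancy of the $\cv_{k,\phi}$ on the soliton shows that $\partial_\tau \mW_3|_{(\phi,\tau)}$ is proportional to the constraint defining $T_{(\phi,\tau)}\mC_1(g)$, and hence vanishes on tangent vectors.

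For the second variation, along a path with $\gamma^\prime(0) = (\psi,t)$, the relevant linearizations at the soliton are
\begin{equation*}
 \delta \cRic_\phi = \nabla^2\psi + \tfrac{t}{2\tau^2}\,g, \qquad \delta \csigma_{1,\phi} = \Delta_\phi \psi + \tfrac{1}{2\tau}\psi - \tfrac{t(\phi - n)}{2\tau^2},
\end{equation*}
where $\Delta_\phi$ is the drift Laplacian. Using these together with the definition of $\cB_\phi$, one computes $\delta\cv_{3,\phi}$ and integrates twice against the weighted measure. Since $(\phi,\tau)$ is already known to be critical, the genuinely second-order contributions from the path drop out, and one obtains an explicit quadratic form $\mW_3''$ in $(\psi,t)$.

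The central step is to establish a decomposition of the form
\begin{equation*}
 \mW_3''(\psi,t) = \alpha\, \mW_1''(\psi,t) + \beta\, \mW_2''(\psi,t) + \mQ(\psi,t)
\end{equation*}
with $\alpha,\beta \ge 0$ and $\mQ$ a non-negative quadratic form in $\delta\cRic_\phi$. Both $\mW_1''$ and $\mW_2''$ are positive semi-definite at a shrinker by Perelman's entropy monotonicity and by~\cite[Propositions~5.2 and~5.7]{Case2014sd}, respectively, so the claimed positivity would follow. I expect the main obstacle to be the cubic-in-curvature contribution from $\lp \cB_\phi, \cRic_\phi\rp$: on its own this piece is not variational, but the twice-contracted weighted Bianchi identity $\delta_\phi \cRic_\phi = d\csigma_{1,\phi}$ combined with repeated weighted integration by parts should allow one to absorb the cross-terms into the $\mW_j''$ pieces, paralleling the Riemannian analysis of $v_3$ at Einstein metrics in~\cite{ChangFangGraham2012,GuoLi2011}.

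Finally, the null space of $\mW_3''$ is the intersection of the null spaces of the three non-negative pieces. For the $\mW_1''$-piece, shrinker entropy rigidity forces any null vector $(\psi,t)$ to satisfy $\nabla^2\psi = -\tfrac{t}{2\tau^2}g$ along a Euclidean direction, which by a Bakry--\'Emery splitting argument (compare~\cite{Case2014sd}) yields an isometric decomposition $(M,g,e^{-\phi}\dvol) = \bR^k \times N$ with $\phi$ quadratic on $\bR^k$. The additional constraints from $\mW_2''$ and $\mQ$ are automatic in these directions and rule out any others. When $N$ is trivial, the joint rescaling direction $(\psi,t) \sim (\phi - \bar\phi, \tau)$ contributes one further kernel element, giving dimension $n+1$ in case~(1); when $N$ is a nontrivial non-Gaussian shrinker this rescaling is obstructed by $N$, giving dimension $k$ in case~(2); absence of any such splitting gives the strictly positive case~(3).
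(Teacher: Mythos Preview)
Your treatment of the critical-point assertion is essentially the paper's Corollary~\ref{cor:grs_crit}, and your linearization formulas for $\cRic_\phi$ and $\csigma_{1,\phi}$ are correct.

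The gap is in the second-variation step. You posit an \emph{exact} decomposition
\[
 \mW_3'' = \alpha\,\mW_1'' + \beta\,\mW_2'' + \mQ, \qquad \alpha,\beta\geq 0,\ \mQ\geq 0,
\]
with $\mQ$ quadratic in $\delta\cRic_\phi$, and then read off the null space as the intersection of the three kernels. The paper does not obtain such a decomposition, and the structure of its computation suggests none of this clean form exists. Differentiating the first variation of $\mV_3$ (Theorem~\ref{thm:W3_first_variation}) produces, besides the ``$A$-piece'' that looks like a multiple of $\mW_1''$, three further terms coming from the $\tau$-variation of $\cv_{2,\phi}(\sigma_{1,\phi}-\tfrac{n}{4\tau})$, of $\langle\cE_{2,\phi}+\tfrac13\cB_\phi,\Ric_\phi\rangle$, and of $\langle 4\tau\cB_\phi+\Ric,\cRic_\phi\rangle$. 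None of these is a square in $\delta\cRic_\phi$; each is handled by the scalar estimate $\int\phi_0^2\,d\nu\leq \tfrac{n}{2}$ of Lemma~\ref{lem:estimates}, which turns them into \emph{inequalities} with equality iff the soliton is a Gaussian or $t=0$. The resulting bound is
\[
 \mW_3''[\psi,t] \ \geq\ \Bigl(\tau^2\cv_{2,\phi}-\tfrac{\tau}{2}\cv_{1,\phi}+\tfrac18\Bigr)\int_M\Bigl[\tau|\nabla\psi_1|^2-\tfrac12\psi_1^2+\tfrac12\bigl(c-\tfrac{t}{\tau}\bigr)^2\phi_0^2\Bigr]d\nu,
\]
where $\psi_1$ is the component of $\psi_0$ orthogonal to $\phi_0$. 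The Bianchi-type integration by parts you invoke is used only in the ``$D$-piece'' to reduce $\int\langle\nabla^2\psi_0,\nabla^2\phi_0\rangle\,d\nu$ to $\tfrac{1}{2\tau}\int\langle\nabla\psi_0,\nabla\phi_0\rangle\,d\nu$; it does not by itself absorb the cross-terms into $\mW_1''$ or $\mW_2''$.

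This matters for the null-space analysis. On a Gaussian the coefficient $\tau^2\cv_{2,\phi}-\tfrac{\tau}{2}\cv_{1,\phi}$ vanishes (so $\mV_3''\equiv 0$), which is precisely why the $\tfrac18\mW_1$ correction is built into $\mW_3$; your scheme with unspecified $\alpha,\beta\geq 0$ does not detect this degeneration. Moreover, the trichotomy is not obtained by intersecting kernels of three nonnegative pieces but from two separate dichotomies: equality in Lemma~\ref{lem:estimates} forces ``Gaussian or $t=0$'', and then the Lichnerowicz--Obata result Lemma~\ref{lem:grs_obata} applied to $\int\tau|\nabla\psi_1|^2=\tfrac12\int\psi_1^2$ yields the splitting and the dimension count. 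Your outline should be modified to (i) compute $\mV_3''$ directly from Theorem~\ref{thm:W3_first_variation}, (ii) invoke Lemma~\ref{lem:estimates} rather than seek a manifestly nonnegative remainder, and (iii) use the equality characterization there, together with Lemma~\ref{lem:grs_obata}, to classify the kernel.
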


Here an $n$-dimensional Gaussian is one of the manifolds with density
\[ \left( \bR^n, dx^2, \exp\left(-\frac{\lv x\rv^2}{4\tau}\right)\dvol \right) \]
for some $\tau>0$.  These are all volume-normalized shrinking gradient Ricci solitons.  We say that $(M^n,g,e^{-\phi}\dvol)$ \emph{factors through a $k$-dimensional Gaussian} if it is isometric to a product
\[ \left( N^{n-k}\times\bR^k, h \oplus dx^2, \exp\left(-f-\frac{\lv x\rv^2}{4\tau}\right)\dvol \right) \]
of a manifold with density $(N^{n-k},h,e^{-f}\dvol)$ and a $k$-dimensional Gaussian but it is not isometric to a corresponding product of an $(n-k-1)$-dimensional manifold with density and a $(k+1)$-dimensional Gaussian.  The null directions in Theorem~\ref{thm:stability_theorem} correspond to translations in the Euclidean factors; when $(M^n,g,e^{-\phi}\dvol)$ is isometric to an $n$-dimensional Gaussian, the additional null direction corresponds to the homothety invariance of the Euclidean metric and the scale invariance of the $\mW_3$-functional.  Specifically, if $x_j$ denote the standard coordinates in the Euclidean factor, then $(x_j,0)$ is always a null direction for $\mW_3^{\prime\prime}$, and on the $n$-dimensional shrinking Gaussian, $(\lv x\rv^2,-4\tau^2)$ is also a null direction for $\mW_3^{\prime\prime}$.

Theorem~\ref{thm:stability_theorem} generalizes~\cite[Theorem~9.2]{Case2014sd} and~\cite[Theorem~1.3]{Case2014sd} for the $\mW_1$- and $\mW_2$-functionals, respectively.  Theorem~\ref{thm:stability_theorem} is also the primary reason we define the $\mW_3$-functional by integrating $\tau^3\cv_{3,\phi}+\frac{\tau}{8}\cv_{1,\phi}$: Consider the functional
\begin{equation}
 \label{eqn:V3}
 \mV_3(g,\phi,\tau) := \int_M \tau^3\cv_{3,\phi}\,(4\pi\tau)^{-\frac{n}{2}}e^{-\phi}\dvol_g .
\end{equation}
If $(M^n,g,e^{-\phi}\dvol)$ is a volume-normalized shrinking gradient Ricci soliton which is not isometric to a shrinking Gaussian, then $\mV_3^{\prime\prime}$ determines a positive semi-definite quadratic form on $T\mC_1(g)$ with finite-dimensional kernel.  However, on shrinking Gaussians, $\mV_3^{\prime\prime}=0$ (cf.\ Theorem~\ref{thm:V3_second_var}).  Adding any positive multiple of the $\mW_1$-functional to $\mV_3$ yields a functional whose second variation on shrinking Gaussians is a positive semi-definite quadratic form on $T\mC_1(g)$ with finite dimensional kernel.  We have chosen the particular multiple in our definition~\eqref{eqn:W3} to ensure that the Euler equation~\eqref{eqn:variational_theorem} is equivalent to $\hsigma_{3,\phi}$ being constant on a Euclidean background (cf.\ \cite[Theorem~7.5]{Case2014sd}).

One could conceivably define the fourth weighted renormalized volume coefficient via an explicit local formula (cf.\ \cite[(2.23)]{Graham2009}) and prove analogous results.  However, this is impractical as a means to defining the full family of weighted renormalized volume coefficients, not in the least because explicit formulae for the general renormalized volume coefficients are not known.  A better approach is to develop a suitable notion of the ``weighted ambient metric'' to which one can adapt the arguments used to study the variational properties of the renormalized volume coefficients~\cite{ChangFang2008,ChangFangGraham2012,Graham2009}.
\section{Some preliminary results}
\label{sec:bg}

Before we can investigate variations of the $\mW_3$-functional, we need some facts involving the weighted Bach tensor~\eqref{eqn:weighted_bach} and some estimates involving shrinking gradient Ricci solitons.

Let $(M^n,g,e^{-\phi}\dvol)$ be a manifold with density and fix $\lambda\in\bR$.  Denote $\cRic_\phi$ in abstract index notation by $\cR_{ij}$.  Then the \emph{weighted Cotton tensor} $d\cRic_\phi$ and the weighted Bach tensor $\cB_\phi$ are,
\begin{align*}
 (d\cR)_{ijk} & := \nabla_i\cR_{jk} - \nabla_j\cR_{ik}, \\
 (\cB_\phi)_{ij} & := \nabla^k(d\cR)_{kij} - (d\cR)_{kij}\nabla^k\phi + R_{ikjl}\cR^{kl} ,
\end{align*}
respectively.  It is important for us to compute the weighted divergence
\[ \bigl(\delta_\phi\cB_\phi\bigr)_j := \nabla^k(\cB_\phi)_{kj} - (\cB_\phi)_{kj}\nabla^k\phi \]
of the weighted Bach tensor.

\begin{lem}
 \label{lem:div_bach}
 Let $(M^n,g,e^{-\phi}\dvol)$ be a manifold with density and let $\lambda\in\bR$.  Then
 \[ \delta_\phi\cB_\phi = -d\Ric_\phi\cdot\Ric_\phi, \]
 where $(d\Ric_\phi\cdot\Ric_\phi)_j=(d\cR)_{ijk}\cR^{ik}$.
\end{lem}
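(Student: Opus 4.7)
The plan is to work in abstract indices, writing $\cB_\phi = \delta_\phi(d\cR) + R\diamond\cR$ with $(R\diamond\cR)_{ij} := R_{ikjl}\cR^{kl}$, and to compute $\delta_\phi$ of each summand separately. Four standard ingredients will be used: the Ricci identity for commuting covariant derivatives; the contracted second Bianchi identity $\nabla^k R_{kilj} = \nabla_l R_{ij} - \nabla_j R_{il}$; the twice-contracted \emph{weighted} Bianchi identity $(\delta_\phi\cR)_i = \nabla_i\csigma_{1,\phi}$, which follows from the usual contracted Bianchi after absorbing $\nabla^j\nabla_i\nabla_j\phi = \nabla_i\Delta\phi + R_{ij}\nabla^j\phi$; and the identity $\nabla_i\nabla_j\phi = \cR_{ij} - R_{ij} + \lambda g_{ij}$, which reorganizes Hessians of $\phi$ into Ricci and $\cR$ contributions.

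For the $\delta_\phi(d\cR)$ piece, antisymmetry of $(d\cR)_{kij}$ in $k,i$ gives
$$\nabla^i\nabla^k(d\cR)_{kij} = \tfrac{1}{2}[\nabla^i,\nabla^k](d\cR)_{kij},$$
so the leading-order contribution is a contraction of $\Rm$ with $d\cR$, and the weighted corrections $-(d\cR)_{kij}\nabla^k\phi$ and its divergence turn into curvature-$\cR$ and $\cR$-quadratic terms after a further commutation. For the $R\diamond\cR$ piece, expand $\delta_\phi(R\diamond\cR)_j$, apply the contracted second Bianchi identity to $\nabla^kR_{kilj}$, use $\Ric = \cR - \nabla^2\phi + \lambda g$ to convert the resulting $\nabla\Ric$ into $\nabla\cR$ modulo a $\nabla^3\phi$ piece that a Ricci identity trades for curvature-$\nabla\phi$, and invoke $(\delta_\phi\cR)_i = \nabla_i\csigma_{1,\phi}$ to discharge the remaining $\nabla\cR$-trace. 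What survives is a linear combination of $d\cR\cdot\cR$ terms and curvature-$\cR$ terms.

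Adding the two contributions, the curvature-$\cR$ terms cancel via the pair-swap symmetry $R_{ikjl}=R_{jlik}$ together with the first Bianchi identity, leaving precisely $-(d\cR)_{ikj}\cR^{ik}$. The principal obstacle is the bookkeeping: roughly a dozen signed terms involving $\Rm$, $\cR$, $\nabla\phi$ and $\nabla^2\phi$ must be tracked, and the most delicate moment is verifying that the $\lambda g$ contribution arising from the substitution $\nabla^2\phi = \cR - \Ric + \lambda g$ drops out (which it must, since $\lambda g$ is parallel and self-dual under the relevant index symmetries). Any miscomputation here would leave a spurious $\lambda$-dependent remainder, so the vanishing of such terms serves as a built-in consistency check throughout the calculation.
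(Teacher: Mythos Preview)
Your plan is a valid direct computation and would, with careful bookkeeping, yield the identity. However, it is a genuinely different route from the paper's argument. The paper does not compute $\delta_\phi\cB_\phi$ from scratch; instead it invokes two structural results from~\cite{Case2014sd}: first, Corollary~5.8 there gives
\[
\delta_\phi\cB_\phi = -\delta_\phi\left(\cE_{2,\phi}-\lambda\cE_{1,\phi}\right) - d\left(\csigma_{2,\phi}-\lambda\csigma_{1,\phi}\right),
\]
and second, Proposition~6.2 there gives the conservation law
\[
\delta_\phi\left(\cE_{2,\phi}-\lambda\cE_{1,\phi}\right) + d\left(\csigma_{2,\phi}-\lambda\csigma_{1,\phi}\right) = d\cRic_\phi\cdot\cRic_\phi,
\]
so the lemma follows immediately by combining the two. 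Your approach is more elementary and self-contained, avoiding the weighted Newton tensors $\cE_{k,\phi}$ entirely, but at the cost of the dozen-term bookkeeping you describe. The paper's approach is conceptually cleaner---it exposes the identity as a consequence of the divergence formulae for the weighted Newton inequalities already developed in the earlier work---but it is opaque without that reference in hand. Both are legitimate; if you want a standalone proof, your outline is the right one, and the $\lambda$-cancellation check you flag is indeed the correct internal diagnostic.
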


\begin{proof}
 \cite[Corollary~5.8]{Case2014sd} implies that
 \[ \delta_\phi\cB_\phi = -\delta_\phi\left(\cE_{2,\phi}-\lambda\cE_{1,\phi}\right) - d\left(\csigma_{2,\phi}-\lambda\csigma_{1,\phi}\right) . \]
 On the other hand, \cite[Proposition~6.2]{Case2014sd} implies that
 \[ \delta_\phi\left(\cE_{2,\phi}-\lambda\cE_{1,\phi}\right) + d\left(\csigma_{2,\phi}-\lambda\csigma_{1,\phi}\right) = d\cRic_\phi\cdot\cRic_\phi . \qedhere \]
\end{proof}

The proofs of Theorem~\ref{thm:variational_theorem} and especially Theorem~\ref{thm:stability_theorem} require three rigidity results for shrinking gradient Ricci solitons.  First, we require the following Lichnerowicz--Obata-type theorem proven by Cheng and Zhou~\cite[Theorem 2]{ChengZhou2013} which holds for manifolds with uniformly positive Bakry-\'Emery Ricci tensor.

\begin{lem}
 \label{lem:grs_obata}
 Let $(M^n,g,e^{-\phi}\dvol)$ be such that $\Ric+\nabla^2\phi\geq\frac{1}{2\tau}g>0$.  Then
 \[ \lambda_1(-\Delta_\phi) := \inf\left\{ \frac{\int \lv\nabla u\rv^2\,e^{-\phi}\dvol}{\int u^2\,e^{-\phi}\dvol} \suchthat u\not=0,  \int_M u\,e^{-\phi}\dvol=0 \right\} \geq \frac{1}{2\tau} . \]
 Equality holds if and only if $(M^n,g,e^{-\phi}\dvol)$ factors through a $k$-dimensional shrinking Gaussian for $k>0$ the dimension of the $L^2$-eigenspace of $-\Delta_\phi$ corresponding to the eigenvalue $\frac{1}{2\tau}$.  This eigenspace is generated by the standard coordinate functions on $\bR^k$.
\end{lem}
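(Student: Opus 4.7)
The plan is to run the classical Lichnerowicz--Obata argument, adapted to the drift Laplacian $\Delta_\phi = \Delta - \lp\nabla\phi,\nabla\cdot\rp$. The essential tool is the weighted Bochner identity
\begin{equation*}
 \tfrac{1}{2}\Delta_\phi\lv\nabla u\rv^2 = \lv\nabla^2 u\rv^2 + \lp\nabla\Delta_\phi u,\nabla u\rp + \bigl(\Ric+\nabla^2\phi\bigr)(\nabla u,\nabla u),
\end{equation*}
which I would integrate against $e^{-\phi}\dvol$ for a first $L^2(e^{-\phi}\dvol)$-eigenfunction $u$ of $-\Delta_\phi$ with eigenvalue $\mu := \lambda_1(-\Delta_\phi)$ and $\int u\,e^{-\phi}\dvol = 0$. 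Using the self-adjointness of $\Delta_\phi$ with respect to the weighted measure, the left-hand side integrates to zero and the middle term becomes $-\mu^2\int u^2 e^{-\phi}\dvol$. Combined with the identity $\int\lv\nabla u\rv^2 e^{-\phi}\dvol = \mu\int u^2 e^{-\phi}\dvol$ and the hypothesis $\Ric+\nabla^2\phi\geq\tfrac{1}{2\tau}g$, the nonnegativity of the Hessian term then immediately yields $\mu^2\geq\tfrac{\mu}{2\tau}$, and hence $\mu\geq\tfrac{1}{2\tau}$.

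For the rigidity, equality forces $\nabla^2 u = 0$ and $(\Ric+\nabla^2\phi)(\nabla u,\nabla u) = \tfrac{1}{2\tau}\lv\nabla u\rv^2$ for every eigenfunction realizing the bound. I would observe that $\nabla u$ is then a nontrivial parallel vector field of constant length and invoke the de Rham decomposition theorem (on the universal cover, descending by the completeness implicit in the setting) to split off an $\bR$-factor isometrically, with $u$ linear along it. The Bakry--Émery equation along this factor reduces to $\partial_t^2\phi = \tfrac{1}{2\tau}$, forcing the density to be Gaussian in that direction after a translation. Iterating this splitting over a basis of the $\tfrac{1}{2\tau}$-eigenspace of $-\Delta_\phi$ peels off a full $k$-dimensional Gaussian factor, and a direct computation on $\bigl(\bR^k,dx^2,\exp(-\lv x\rv^2/(4\tau))\dvol\bigr)$ then identifies the corresponding eigenspace with the span of the standard coordinates $x_1,\dots,x_k$.

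The main obstacle lies in the non-compact case: justifying the integration by parts requires that $u$, $\nabla u$, and $\nabla^2 u$ all lie in $L^2(e^{-\phi}\dvol)$ and that boundary terms at infinity vanish. Under the strict lower bound $\Ric+\nabla^2\phi\geq\tfrac{1}{2\tau}g>0$, the work of~\cite{Wylie2008} and its generalizations guarantees that $e^{-\phi}\dvol$ has finite total mass and that the weighted geometry has polynomial control at infinity; combined with standard weighted gradient and elliptic estimates for eigenfunctions of $\Delta_\phi$, this closes the analytic gap. A secondary subtlety in the rigidity step is verifying that the remaining $N^{n-k}$-factor does not itself contribute to the $\tfrac{1}{2\tau}$-eigenspace, which follows by the maximality of $k$ together with a dimensional induction on $M$.
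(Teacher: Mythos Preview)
The paper does not actually prove this lemma: it is quoted verbatim as a result of Cheng and Zhou~\cite[Theorem~2]{ChengZhou2013}, with no argument supplied in the text. Your proposal therefore cannot be compared against a proof in the paper, only against the cited reference.

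That said, your outline is precisely the standard weighted Lichnerowicz--Obata argument that Cheng--Zhou carry out: integrate the drifted Bochner formula against $e^{-\phi}\dvol$, drop the Hessian term, and read off $\mu\geq\tfrac{1}{2\tau}$; then, in the equality case, use $\nabla^2u=0$ to split off a Euclidean line via de~Rham and recover the Gaussian density from the eigenfunction equation $\Delta_\phi u=-\tfrac{1}{2\tau}u$ combined with $\Delta u=0$. One small point worth making explicit in the rigidity step: the fact that $\phi$ itself splits as $\tfrac{t^2}{4\tau}+f(x)$ along $\bR\times N$ is not a consequence of the single scalar identity $(\Ric+\nabla^2\phi)(\nabla u,\nabla u)=\tfrac{1}{2\tau}\lv\nabla u\rv^2$ alone, but follows more directly from rewriting $-\tfrac{1}{2\tau}u=\Delta_\phi u=-\lp\nabla\phi,\nabla u\rp$ (since $\Delta u=0$) and integrating in the flat direction. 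Otherwise your sketch is sound, and the analytic caveats you flag in the noncompact case are exactly the ones handled in the cited literature.
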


Second, we need to know that the weighted $\sigma_k$-curvatures of a volume-normalized shrinking gradient Ricci soliton are constant and have a sign~\cite[Lemma 4.9]{Case2014sd}.

\begin{lem}
 \label{lem:grs_sigmak}
 Let $(M^n,g,e^{-\phi}\dvol)$ be a volume-normalized shrinking gradient Ricci soliton.  Then for all $k\in\bN$,
 \[ \csigma_{k,\phi} = \frac{1}{k!}\left(\csigma_{1,\phi}\right)^k \]
 is constant.  Moreover, $\csigma_{1,\phi}\leq0$ with equality if and only if $(M^n,g,e^{-\phi}\dvol)$ is isometric to a shrinking Gaussian.
\end{lem}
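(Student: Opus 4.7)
The proof naturally splits into three pieces, all resting on the fact that on a shrinking gradient Ricci soliton $\cRic_\phi=\Ric+\nabla^2\phi-\tfrac{1}{2\tau}g$ vanishes identically. For the polynomial identity, inspect the displayed formulas for $\csigma_{2,\phi}$ and $\csigma_{3,\phi}$ in the introduction: every summand other than $\tfrac{1}{k!}(\csigma_{1,\phi})^k$ contains a factor of $|\cRic_\phi|^2$ or $\tr(\cRic_\phi)^3$ and therefore drops out, leaving precisely $\tfrac{1}{2}\csigma_{1,\phi}^2$ and $\tfrac{1}{6}\csigma_{1,\phi}^3$. The general definition of $\csigma_{k,\phi}$ from \cite{Case2014sd} exhibits the same structural feature, so the identity persists for all $k$.

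For the constancy of $\csigma_{1,\phi}$, apply two classical shrinker identities. Tracing the soliton equation gives $R+\Delta\phi=n/(2\tau)$, and combining the soliton equation with the twice-contracted second Bianchi identity yields Hamilton's energy identity $R+|\nabla\phi|^2-\phi/\tau=C$ for some constant $C\in\bR$. Substituting both into the explicit expression for $\csigma_{1,\phi}$ collapses it to $\csigma_{1,\phi}=-C/2$.

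For the sign and the rigidity, exploit that $\csigma_{1,\phi}$ is a constant and the weighted measure has total mass one, so $\csigma_{1,\phi}=\int \csigma_{1,\phi}\,(4\pi\tau)^{-n/2}e^{-\phi}\dvol$. Using $\int\Delta_\phi\phi\cdot e^{-\phi}\dvol=0$ to integrate by parts converts this integral into $\tfrac{1}{2\tau}$ times Perelman's entropy functional evaluated at the soliton potential. Since $\phi$ realizes the infimum under the volume constraint, this equals $\tfrac{1}{2\tau}\mu(g,\tau)$, and Perelman's sharp bound $\mu(g,\tau)\leq 0$ on volume-normalized shrinking gradient Ricci solitons, with equality forcing the Gaussian, yields the claimed conclusion.

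The principal obstacle is the rigidity clause: showing that $\csigma_{1,\phi}=0$ implies the soliton is a Gaussian. This is the equality case of a sharp weighted logarithmic Sobolev inequality in the Bakry--\'Emery category and lies distinctly deeper than the essentially algebraic and first-order manipulations used in the first two parts; one expects to invoke either Perelman's monotonicity combined with a Wei--Wylie-type splitting, or an adaptation of the Carrillo--Ni argument for the shrinker entropy, to isolate the Euclidean factor.
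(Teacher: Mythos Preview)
The paper does not give its own proof here; the lemma is quoted from \cite[Lemma~4.9]{Case2014sd}, so there is nothing in the present paper to compare your argument against. On its own merits your outline is essentially correct, with one soft spot.

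Parts one and two are fine. On a shrinking soliton $\cRic_\phi\equiv 0$, so every term in the Newton-type expansion of $\csigma_{k,\phi}$ involving $\cRic_\phi$ vanishes, leaving $\tfrac{1}{k!}(\csigma_{1,\phi})^k$; and substituting the traced soliton equation together with Hamilton's identity $R+|\nabla\phi|^2-\phi/\tau=\text{const}$ into the formula for $\csigma_{1,\phi}$ does collapse it to $-C/2$ exactly as you write.

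In part three, your identity $\csigma_{1,\phi}=\tfrac{1}{2\tau}\mW(g,\phi,\tau)$ is correct (the integration by parts is justified on a complete shrinker by the Cao--Zhou growth estimates for $\phi$). The weak point is the next sentence: you assert that $\phi$ realizes the infimum $\mu(g,\tau)$, but the soliton equation only tells you $\phi$ is a \emph{critical point} of $\mW(g,\cdot,\tau)$, not a global minimizer, and turning criticality into minimality is not free. Fortunately this detour is unnecessary. What you actually need is the inequality $\mW(g,\phi,\tau)\le 0$ for the soliton potential itself, together with its rigidity, and that is exactly the sharp logarithmic Sobolev inequality of Carrillo--Ni that you name in your final paragraph. (Attributing the bound to Perelman is not quite right: Perelman established $\mu=0$ on Euclidean space and the monotonicity of $\mu$ along Ricci flow, but the inequality $\mW(g,\phi,\tau)\le 0$ on an arbitrary complete shrinker, with Gaussian rigidity, is due to Carrillo--Ni.) Once you invoke Carrillo--Ni directly and drop the passage through $\mu$, the argument is complete.
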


Third, we need to know that the mean-free part $\phi_0$ of the potential $\phi$ of a shrinking gradient Ricci soliton is an eigenfunction for the weighted Laplacian $\Delta_\phi$ and the $L^2$-norm of $\phi_0$ is controlled~\cite[Lemma 4.10]{Case2014sd}.  Then

\begin{lem}
 \label{lem:estimates}
 Let $(M^n,g,e^{-\phi}\dvol)$ be a volume-normalized shrinking gradient Ricci soliton.  Set $\phi_0:=\phi-\frac{n}{2}-2\tau\csigma_{1,\phi}$.
 \begin{align}
  \label{eqn:sigma1_estimate} -\Delta_\phi\phi_0 & = \frac{1}{\tau}\phi_0, \\
  \label{eqn:sigmak_formula} \int_M \phi_0^2 (4\pi\tau)^{-\frac{n}{2}}e^{-\phi}\dvol & \leq \frac{n}{2} .
 \end{align}
 Moreover, equality holds in~\eqref{eqn:sigmak_formula} if and only if $(M^n,g,e^{-\phi}\dvol)$ is isometric to a shrinking Gaussian.
\end{lem}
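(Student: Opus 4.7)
My plan is to prove the lemma in three self-contained steps, using only the soliton equation, Hamilton's identity for shrinkers, weighted integration by parts, and the fact that the scalar curvature of a shrinking gradient Ricci soliton is non-negative.

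For the eigenvalue identity~\eqref{eqn:sigma1_estimate}, I would first extract two standard consequences of the soliton equation $\Ric+\nabla^2\phi=\frac{1}{2\tau}g$: its trace $R+\Delta\phi=\frac{n}{2\tau}$, and Hamilton's identity $R+\lv\nabla\phi\rv^2-\frac{\phi}{\tau}=C$ for some constant $C$ (derived by contracting the soliton equation with $\nabla\phi$ and invoking the contracted second Bianchi identity). Substituting both into the explicit formula for $\csigma_{1,\phi}$ identifies $C=-2\csigma_{1,\phi}$. Since $\Delta_\phi\phi=\Delta\phi-\lv\nabla\phi\rv^2$, combining these identities yields $\Delta_\phi\phi=\frac{n}{2\tau}-\frac{\phi}{\tau}+2\csigma_{1,\phi}$, which is exactly $-\phi_0/\tau$.

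Next, I would verify $\int\phi_0\,d\mu=0$ with $d\mu:=(4\pi\tau)^{-\frac{n}{2}}e^{-\phi}\dvol$, by integrating the defining formula for $\csigma_{1,\phi}$ against $d\mu$ and simplifying via $\int\Delta_\phi\phi\,d\mu=0$ (so that $\int\Delta\phi\,d\mu=\int\lv\nabla\phi\rv^2\,d\mu$) together with the soliton trace; this yields $\int\phi\,d\mu=2\tau\csigma_{1,\phi}+\frac{n}{2}$. Multiplying~\eqref{eqn:sigma1_estimate} by $\phi_0$ and integrating, and then applying $\int\lv\nabla\phi\rv^2\,d\mu=\int\Delta\phi\,d\mu=\frac{n}{2\tau}-\int R\,d\mu$, gives the exact identity $\int\phi_0^2\,d\mu=\frac{n}{2}-\tau\int R\,d\mu$, whence~\eqref{eqn:sigmak_formula} follows from the non-negativity of $R$ on a shrinking gradient Ricci soliton.

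For the rigidity, equality in~\eqref{eqn:sigmak_formula} forces $R\equiv0$; the standard soliton identity $\Delta_\phi R=\frac{R}{\tau}-2\lv\Ric\rv^2$ then gives $\Ric\equiv0$, so that the soliton equation reduces to $\nabla^2\phi=\frac{1}{2\tau}g$, and Tashiro's rigidity theorem identifies $(M^n,g,e^{-\phi}\dvol)$ with a shrinking Gaussian; the converse is a direct computation. The routine weighted calculus is not the hard part: the genuinely non-trivial inputs are the non-negativity of $R$ on shrinking gradient Ricci solitons and the $R\equiv0$ rigidity, and I expect these to be the main obstacle in the sense that the rest of the argument collapses without them. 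Fortunately both are by now standard facts about shrinkers, so I would cite them rather than reprove them.
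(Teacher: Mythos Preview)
Your argument is correct and is essentially the standard proof of this fact. Note, however, that the paper does not actually prove Lemma~\ref{lem:estimates}: it is quoted verbatim from \cite[Lemma~4.10]{Case2014sd}, so there is no ``paper's own proof'' to compare against beyond that citation. Your outline reproduces exactly the computation one finds in that reference: identify Hamilton's constant with $-2\csigma_{1,\phi}$, use the trace of the soliton equation together with $\Delta_\phi\phi=\Delta\phi-\lvert\nabla\phi\rvert^2$ to obtain the eigenvalue equation, then integrate by parts to get the exact identity
\[
\int_M \phi_0^2\,d\mu = \frac{n}{2} - \tau\int_M R\,d\mu,
\]
and conclude via Chen's $R\geq 0$ result and the $R\equiv 0$ rigidity for shrinkers. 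One small point worth flagging: since the rigidity statement forces the Gaussian, which is non-compact, you should mention that the integrations by parts (e.g.\ $\int\Delta_\phi\phi\,d\mu=0$ and $\int\phi_0\Delta_\phi\phi_0\,d\mu=-\int\lvert\nabla\phi_0\rvert^2\,d\mu$) are justified by the quadratic growth of $\phi$ and the Gaussian-type decay of $e^{-\phi}\dvol$ on complete shrinkers; you implicitly use this but do not say it. Otherwise the proposal is complete.
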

\section{The first variation of $\mW_3$}
\label{sec:variational}

In order to compute both the first and second variations of $\mW_3$, we need to compute the first variations of $\cv_{k,\phi}$ for $k\in\{1,2,3\}$.  For the remainder of this note, given a Riemannian manifold $(M^n,g)$ and a smooth curve $\gamma(s)=\left(\phi(s),\tau(s)\right)\in C^\infty(M)\times(0,\infty)$, weighted invariants (e.g.\ $\cv_{3,\phi}$) represent the $s$-dependent invariant defined in terms of the manifold with density $(M^n,g,e^{-\phi(s)}\dvol_g)$ with parameter $\lambda=1/\left(2\tau(s)\right)$ and primes (e.g.\ $(\tau^3\cv_{3,\phi})^\prime$) denote the $s$-derivative along the curve.  Similarly, a derivative at zero (e.g.\ $(\tau^3\cv_{3,\phi})^\prime(0)$) denotes the $s$-derivative evaluated at $s=0$.  We denote $(\psi,t)=\frac{\partial\gamma}{\partial s}$ and $\psi_0=\psi+\frac{nt}{2\tau}$.  We always integrate with respect to $d\nu:=(4\pi\tau)^{-\frac{n}{2}}e^{-\phi}\dvol$; in particular, $\frac{\partial}{\partial s}(d\nu) = -\psi_0\,d\nu$.

Recall the first variation of $\cv_{1,\phi}$; see~\cite[(9.2)]{Case2014sd}.

\begin{lem}
 \label{lem:v1_local_all_var}
 Let $(M^n,g)$ be a Riemannian manifold and let $\gamma(s)=\left(\phi(s),\tau(s)\right)$ be a smooth curve in $C^\infty(M)\times(0,\infty)$.  Then
 \begin{equation}
  \label{eqn:v1_local_var}
  \left(\tau\cv_{1,\phi}\right)^\prime = \tau\Delta_\phi\psi_0 + \frac{1}{2}\psi_0 + t\left(\sigma_{1,\phi} - \frac{n}{4\tau}\right) .
 \end{equation}
\end{lem}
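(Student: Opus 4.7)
The plan is to differentiate the explicit local formula
\[ \tau\cv_{1,\phi} = \tau\sigma_{1,\phi} + \tfrac12(\phi-n), \]
which is obtained by substituting $\lambda=1/(2\tau)$ into the definition of $\csigma_{1,\phi}$ and separating out the explicit $\tau$-dependence; here $\sigma_{1,\phi} := \frac12(R+2\Delta\phi-\lv\nabla\phi\rv^2)$ is the $\lambda$-independent part (Perelman's scalar).

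First I would compute the variation of $\sigma_{1,\phi}$ along $\gamma(s)$. Since the metric is held fixed, the scalar curvature $R$ does not vary, and a straightforward computation using $(\Delta\phi)^\prime=\Delta\psi$ and $(\lv\nabla\phi\rv^2)^\prime=2\lp\nabla\phi,\nabla\psi\rp$ gives
\[ (\sigma_{1,\phi})^\prime = \Delta\psi - \lp\nabla\phi,\nabla\psi\rp = \Delta_\phi\psi. \]
Combined with the product rule applied to $\tau\sigma_{1,\phi}$ and the trivial derivative of $\frac12(\phi-n)$, this yields
\[ (\tau\cv_{1,\phi})^\prime = t\,\sigma_{1,\phi} + \tau\Delta_\phi\psi + \tfrac12\psi. \]

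Finally, I would convert to the variable $\psi_0 = \psi+\frac{nt}{2\tau}$, which is the natural variable because $\psi_0\,d\nu = -(d\nu)^\prime$ is the weight appearing in integrations against $d\nu$. Substituting $\psi=\psi_0-\frac{nt}{2\tau}$ and using that $\Delta_\phi$ annihilates constants (so $\Delta_\phi\psi=\Delta_\phi\psi_0$), one gets
\[ (\tau\cv_{1,\phi})^\prime = \tau\Delta_\phi\psi_0 + \tfrac12\psi_0 + t\left(\sigma_{1,\phi} - \tfrac{n}{4\tau}\right), \]
which is the claimed identity \eqref{eqn:v1_local_var}.

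There is no real obstacle here beyond careful bookkeeping: the only subtle point is tracking the $\tau$-dependence carried by $\lambda=1/(2\tau)$, which contributes the explicit term $\frac{1}{2\tau}(\phi-n)$ to $\cv_{1,\phi}$ and hence the $-\frac{nt}{4\tau}$ in the final formula after passing to $\psi_0$.
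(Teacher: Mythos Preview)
Your proof is correct and entirely self-contained. The paper does not actually prove this lemma; it simply recalls the formula by citing \cite[(9.2)]{Case2014sd}, so your direct computation from the explicit expression $\tau\cv_{1,\phi}=\tau\sigma_{1,\phi}+\tfrac12(\phi-n)$ is a welcome elementary derivation that fills in what the paper leaves as a reference.
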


Next, we compute the first variation of $\cv_{2,\phi}$ (cf.\ \cite[(9.4)]{Case2014sd}).

\begin{lem}
 \label{lem:v2_local_all_var}
 Let $(M^n,g)$ be a Riemannian manifold and let $\gamma(s)=\left(\phi(s),\tau(s)\right)$ be a smooth curve in $C^\infty(M)\times(0,\infty)$.  Then
 \begin{multline}
  \label{eqn:v2_local_var}
  \left(\tau^2\cv_{2,\phi}\right)^\prime = \tau^2\delta_\phi\left(\cT_{1,\phi}(\nabla\psi_0)\right) + \frac{\tau}{2}\cv_{1,\phi}\psi_0 \\ + t\tau\cv_{1,\phi}\left(\sigma_{1,\phi} - \frac{n}{4\tau}\right) + t\tau\lp \cE_{1,\phi}, \Ric_\phi\rp ,
 \end{multline}
 where $\cT_{1,\phi}=\cv_{1,\phi}g-\cRic_\phi$ and $\cE_{1,\phi}=\cT_{1,\phi}-\cv_{1,\phi}g$.
\end{lem}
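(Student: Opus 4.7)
The plan is to compute $(\tau^2 \cv_{2,\phi})^\prime$ directly from $\cv_{2,\phi}=\frac{1}{2}\bigl(\csigma_{1,\phi}^2-\lv\cRic_\phi\rv^2\bigr)$ and then reorganize the result so that a divergence term, a $\psi_0$-linear term, and a $t$-proportional remainder emerge. Along the curve I first note that since $\psi-\psi_0=-\frac{nt}{2\tau}$ is spatially constant, $\nabla^2\psi=\nabla^2\psi_0$, and a direct calculation using $\lambda=1/(2\tau)$ gives $(\cRic_\phi)^\prime=\nabla^2\psi_0+\frac{t}{2\tau^2}g$. Substituting $\psi=\psi_0-\frac{nt}{2\tau}$ into Lemma~\ref{lem:v1_local_all_var} (or its proof) rewrites the derivative of $\csigma_{1,\phi}=\cv_{1,\phi}$ in the form
\[
\csigma_{1,\phi}^\prime = \Delta_\phi\psi_0 + \frac{\psi_0}{2\tau} - \frac{t(2\phi-n)}{4\tau^2}.
\]

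The key manipulation is to extract a divergence from $\lp\cRic_\phi,\nabla^2\psi_0\rp$. Here I would use the weighted contracted second Bianchi identity $\delta_\phi\cRic_\phi = d\csigma_{1,\phi}$, which follows from the ordinary contracted Bianchi identity together with the commutator $\nabla^i\nabla_i\nabla_j\phi=\nabla_j\Delta\phi+R_{jk}\nabla^k\phi$ and the explicit form of $\csigma_{1,\phi}$. Combining it with the product rules $\delta_\phi(\csigma_{1,\phi}\nabla\psi_0)=\csigma_{1,\phi}\Delta_\phi\psi_0+\lp\nabla\csigma_{1,\phi},\nabla\psi_0\rp$ and $\delta_\phi(\cRic_\phi(\nabla\psi_0))=\lp\nabla\csigma_{1,\phi},\nabla\psi_0\rp+\lp\cRic_\phi,\nabla^2\psi_0\rp$ and subtracting yields the desired identity
\[
\lp\cRic_\phi,\nabla^2\psi_0\rp = \csigma_{1,\phi}\Delta_\phi\psi_0 - \delta_\phi\bigl(\cT_{1,\phi}(\nabla\psi_0)\bigr),
\]
so that upon assembling $\cv_{2,\phi}^\prime=\csigma_{1,\phi}\csigma_{1,\phi}^\prime-\lp\cRic_\phi,(\cRic_\phi)^\prime\rp$ the $\Delta_\phi\psi_0$ contributions cancel, leaving a clean divergence plus explicit algebraic remainders.

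From $(\tau^2\cv_{2,\phi})^\prime=2t\tau\cv_{2,\phi}+\tau^2\cv_{2,\phi}^\prime$ the divergence term $\tau^2\delta_\phi(\cT_{1,\phi}(\nabla\psi_0))$ and the term $\frac{\tau}{2}\cv_{1,\phi}\psi_0$ appear directly; the remaining $t$-proportional terms are
\[
2t\tau\cv_{2,\phi} - \frac{t\csigma_{1,\phi}(2\phi-n)}{4} - \frac{t\,\tr\cRic_\phi}{2},
\]
coming respectively from the $(\tau^2)^\prime$ factor, the $t$-piece of $\csigma_{1,\phi}^\prime$, and the $t$-piece of $(\cRic_\phi)^\prime$. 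The final step is to check that these collapse to $t\tau\cv_{1,\phi}(\sigma_{1,\phi}-\frac{n}{4\tau})+t\tau\lp\cE_{1,\phi},\Ric_\phi\rp$. This is pure algebra using the definitions of $\cT_{1,\phi}$ and $\cE_{1,\phi}$ together with $\Ric_\phi=\cRic_\phi+\frac{1}{2\tau}g$ and the shift $\csigma_{1,\phi}-\sigma_{1,\phi}=\frac{\phi-n}{2\tau}$; these identities reduce the equality of the two expressions to the single relation $\tau\csigma_{1,\phi}(\csigma_{1,\phi}-\sigma_{1,\phi})=\frac{\csigma_{1,\phi}(\phi-n)}{2}$, which is immediate.

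The main technical step is the Bianchi-based manipulation that produces the divergence of $\cT_{1,\phi}(\nabla\psi_0)$; the rest is bookkeeping of $\tau$- and $t$-weights. I anticipate no genuine obstacle since the $\phi$-variational structure of $\csigma_{2,\phi}$ (at fixed $\tau$) is already understood in \cite{Case2014sd} and the $\tau$-variation at fixed $\phi$ is entirely explicit through $\lambda=1/(2\tau)$; the content of the lemma is precisely that the two variations combine cleanly in the variable $\psi_0$.
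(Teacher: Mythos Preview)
Your proposal is correct. The computations check: $(\cRic_\phi)^\prime=\nabla^2\psi_0+\frac{t}{2\tau^2}g$ and your formula for $\csigma_{1,\phi}^\prime$ are right, the weighted Bianchi identity $\delta_\phi\cRic_\phi=d\csigma_{1,\phi}$ holds as you say, and the final algebraic reduction of the $t$-terms goes through (indeed $\cE_{1,\phi}=-\cRic_\phi$ and $\Ric_\phi=\cRic_\phi+\frac{1}{2\tau}g$ make it routine).

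Your route differs from the paper's. The paper splits the variation into the two special cases $t=0$ and $\psi_0=0$ and adds them by linearity; for the $t=0$ case it simply cites \cite[Corollary~8.4]{Case2014sd}, and for $\psi_0=0$ it differentiates $\tau^2\cv_{2,\phi}=\frac12\bigl((\tau\cv_{1,\phi})^2-\lv\tau\cRic_\phi\rv^2\bigr)$ directly. You instead carry out a single unified computation, reproving the $\phi$-variation via the Bianchi identity rather than importing it. The advantage of the paper's approach is brevity once one accepts the cited corollary; the advantage of yours is that it is self-contained and makes transparent \emph{why} the divergence of $\cT_{1,\phi}(\nabla\psi_0)$ appears, namely as the exact mechanism cancelling the $\csigma_{1,\phi}\Delta_\phi\psi_0$ terms. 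Either way the substance is the same Bianchi manipulation.
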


\begin{proof}
 Suppose first that $t=0$.  From~\cite[Corollary~8.4]{Case2014sd} we see that
 \begin{equation}
  \label{eqn:v21}
  \left(\tau^2\cv_{2,\phi}\right)^\prime = \tau^2\delta_\phi\left(\cT_{1,\phi}(\nabla\psi_0)\right) + \frac{\tau}{2}\cv_{1,\phi}\psi_0 .
 \end{equation}
 Suppose next that $\psi_0=0$.  The expression
 \[ \tau^2\cv_{2,\phi} = \frac{1}{2}\left(\left(\tau\cv_{1,\phi}\right)^2 - \left| \tau\cRic_\phi\right|^2 \right) \]
 readily implies that
 \begin{equation}
  \label{eqn:v22}
  \left(\tau^2\cv_{2,\phi}\right)^\prime = t\tau\cv_{1,\phi}\left(\sigma_{1,\phi} - \frac{n}{4\tau}\right) - t\tau\lp\cRic_\phi,\Ric_\phi\rp .
 \end{equation}
 Combining~\eqref{eqn:v21} and~\eqref{eqn:v22} yields~\eqref{eqn:v2_local_var}.
\end{proof}

Lastly, we compute the first variation of $\cv_{3,\phi}$.

\begin{lem}
 \label{lem:v3_local_all_var}
 Let $(M^n,g)$ be a Riemannian manifold and let $\gamma(s)=\left(\phi(s),\tau(s)\right)$ be a smooth curve in $C^\infty(M)\times(0,\infty)$.  Then
 \begin{equation}
  \label{eqn:v3_local_var}
  \begin{split}
   \left(\tau^3\cv_{3,\phi}\right)^\prime & = \tau^3\delta_\phi\left(\left(\cT_{2,\phi}+\frac{1}{3}\cB_\phi\right)(\nabla\psi_0)\right) + \frac{\tau^2}{2}\cv_{2,\phi}\psi_0 \\
    & \quad + t\tau^2\cv_{2,\phi}\left(\sigma_{1,\phi}-\frac{n}{4\tau}\right) + t\tau^2\left\lp \cE_{2,\phi} + \frac{1}{3}\cB_\phi,\Ric_\phi\right\rp \\
    & \quad+ \frac{t\tau^2}{3}\left\lp 2\cB_\phi + \frac{1}{2\tau}\Ric, \cRic_\phi\right\rp ,
  \end{split}
 \end{equation}
 where $\cT_{2,\phi}=\cv_{2,\phi}g-\cv_{1,\phi}\cRic_\phi+\bigl(\cRic_\phi\bigr)^2$ and $\cE_{2,\phi}=\cT_{2,\phi}-\cv_{2,\phi}g$.
\end{lem}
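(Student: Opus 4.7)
My plan is to follow the two-step decomposition used in the proof of Lemma~\ref{lem:v2_local_all_var}: first establish the formula in the case $t=0$ (pure $\phi$-variation, so $\psi=\psi_0$), then in the case $\psi_0=0$ (equivalently, $\psi=-nt/(2\tau)$ is a spatial constant), and combine these by linearity of the first variation in $(\psi,t)$.

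For the case $t=0$, I split $\tau^3\cv_{3,\phi} = \tau^3\csigma_{3,\phi}+\tfrac{\tau^3}{3}\lp\cB_\phi,\cRic_\phi\rp$ and handle each piece in turn. By the same machinery cited in Lemma~\ref{lem:v2_local_all_var}, extending \cite[Corollary~8.4]{Case2014sd} to $k=3$ yields
\[ (\tau^3\csigma_{3,\phi})^\prime = \tau^3\delta_\phi\bigl(\cT_{2,\phi}(\nabla\psi_0)\bigr) + \tfrac{\tau^2}{2}\cv_{2,\phi}\psi_0 + \tau^3\lp d\cRic_\phi\cdot\cRic_\phi,\nabla\psi_0\rp, \]
where the last term is the non-divergence obstruction to $\csigma_{3,\phi}$ being variational. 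For the second piece I compute $(\cRic_\phi)^\prime=\nabla^2\psi_0$ and, using the second Bianchi identity to commute covariant derivatives through $\delta_\phi d\cRic_\phi$, derive $(\cB_\phi)^\prime$. After integration by parts and invoking Lemma~\ref{lem:div_bach}, the obstruction term rewrites as $-\tau^3\lp\delta_\phi\cB_\phi,\nabla\psi_0\rp$ and combines with the differentiated Bach term to produce precisely $\tau^3\delta_\phi(\tfrac{1}{3}\cB_\phi(\nabla\psi_0))$, giving the $t=0$ case of~\eqref{eqn:v3_local_var}.

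For the case $\psi_0=0$, the computation is purely algebraic. Since $\psi$ is constant one has $(\Ric)^\prime = (\nabla^2\phi)^\prime = 0$, and $\lambda^\prime = -t/(2\tau^2)$ yields $(\cRic_\phi)^\prime = (t/(2\tau^2))g$, equivalently $(\tau\cRic_\phi)^\prime = t\,\Ric_\phi$; Lemma~\ref{lem:v1_local_all_var} likewise gives $(\tau\cv_{1,\phi})^\prime = t(\sigma_{1,\phi}-n/(4\tau))$. Applying the chain rule to the homogeneous expression
\[ \tau^3\csigma_{3,\phi} = \tfrac{1}{6}\bigl((\tau\cv_{1,\phi})^3 - 3(\tau\cv_{1,\phi})\lv\tau\cRic_\phi\rv^2 + 2\tr(\tau\cRic_\phi)^3\bigr), \]
and rewriting the result via $\cE_{2,\phi}=(\cRic_\phi)^2-\cv_{1,\phi}\cRic_\phi$, produces the first two $t\tau^2$-terms in~\eqref{eqn:v3_local_var}. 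For the Bach contribution I note that $\cB_\phi$ depends on $\tau$ only through its $-\lambda\Ric$ piece, so $(\cB_\phi)^\prime = (t/(2\tau^2))\Ric$; expanding $(\tfrac{\tau^3}{3}\lp\cB_\phi,\cRic_\phi\rp)^\prime$ and using $\Ric_\phi=\cRic_\phi+\lambda g$ to rearrange recovers exactly the remaining terms.

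The principal technical obstacle is the $t=0$ computation: carefully evaluating $(\cB_\phi)^\prime$ under a pure $\phi$-variation and verifying that, after integration by parts, the resulting non-divergence pieces absorb the obstruction appearing in $(\tau^3\csigma_{3,\phi})^\prime$ via Lemma~\ref{lem:div_bach}. This cancellation is the algebraic reason that adding the Bach correction to $\csigma_{3,\phi}$ in the definition of $\cv_{3,\phi}$ restores the divergence structure needed for variationality on general backgrounds.
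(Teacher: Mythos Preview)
Your plan is essentially identical to the paper's proof: the same two-case split, the same invocation of \cite[Corollary~8.4]{Case2014sd} for $(\tau^3\csigma_{3,\phi})'$, the same direct computation of $(\cB_\phi)'$ (the paper records this as $(\cB_\phi)'=2\,d\cRic_\phi(\cdot,\nabla\psi_0,\cdot)$ via $\delta_\phi\Rm=d\cRic_\phi$), and the same use of Lemma~\ref{lem:div_bach} to assemble the $\frac{1}{3}\cB_\phi$-divergence; the $\psi_0=0$ case is likewise handled by the same chain-rule expansion, the paper merely packaging your $(\cB_\phi)'=(t/2\tau^2)\Ric$ as $(\tau\cB_\phi)'=tB_\phi$ with $\tau B_\phi=\tau\cB_\phi+\tfrac12\Ric$. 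One caveat: the obstruction term from \cite[Corollary~8.4]{Case2014sd} carries a \emph{minus} sign, $-\tau^3(d\cRic_\phi\cdot\cRic_\phi)(\nabla\psi_0)$, not the plus sign you wrote; with your sign the coefficients would not combine to the required $\tfrac13$, so be sure to correct this when you execute the computation.
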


\begin{proof}
 Suppose first that $t=0$.  By~\cite[Corollary~8.4]{Case2014sd}, it holds that
 \begin{equation}
  \label{eqn:v3dot1}
  \left(\tau^3\csigma_{3,\phi}\right)^\prime = \tau^3\delta_\phi\left(\cT_{2,\phi}(\nabla\psi_0)\right) + \frac{\tau^2}{2}\cv_{2,\phi}\psi_0 - \tau^3\left(d\cRic_\phi\cdot\cRic_\phi\right)(\nabla\psi_0) .
 \end{equation}
 Using the identity $\delta_\phi\Rm=d\cRic_\phi$, we directly compute that
 \[ \bigl(\cB_\phi\bigr)^\prime = 2d\cRic_\phi(\cdot,\nabla\psi_0,\cdot) . \]
 It follows that
 \begin{equation}
  \label{eqn:v3dot2}
  \left(\lp\cB_\phi,\cRic_\phi\rp\right)^\prime = 2\left(d\cRic_\phi\cdot\cRic_\phi\right)(\nabla\psi_0) + \lp\cB_\phi,\nabla^2\psi_0\rp .
 \end{equation}
 In particular, combining~\eqref{eqn:v3dot1} and~\eqref{eqn:v3dot2} with Lemma~\ref{lem:div_bach} yields
 \begin{equation}
  \label{eqn:v31}
  \left(\tau^3\cv_{3,\phi}\right)^\prime = \tau^3\delta_\phi\left(\left(\cT_{2,\phi}+\frac{1}{3}\cB_\phi\right)(\nabla\psi_0)\right) + \frac{\tau^2}{2}\cv_{2,\phi}\psi_0 .
 \end{equation}

 Suppose next that $\psi_0=0$.  Write
 \[ \tau^3\cv_{3,\phi} = \frac{1}{6}\left(\tau\cv_{1,\phi}\right)^3 - \frac{1}{2}\left(\tau\cv_{1,\phi}\right)\lv\tau\cRic_\phi\rv^2 + \frac{1}{3}\tr\left(\tau\cRic_\phi\right)^3 + \frac{\tau}{3}\lp\tau\cB_\phi,\tau\cRic_\phi\rp . \]
 One readily computes that $(\tau\cB_\phi)^\prime=tB_\phi$, from which it follows that
 \[ \left(\tau^3\cv_{3,\phi}\right)^\prime = t\tau^2\left(\cv_{2,\phi}\left(\sigma_{1,\phi}-\frac{n}{4\tau}\right) + \left\lp\cE_{2,\phi}+\frac{1}{3}\cB_\phi,\Ric_\phi\right\rp + \frac{1}{3}\lp\cB_\phi+B_\phi,\cRic_\phi\rp\right) . \]
 Combining this with~\eqref{eqn:v31} and the identity $\tau B_\phi=\tau\cB_\phi+\frac{1}{2}\Ric$ yields~\eqref{eqn:v3_local_var}.
\end{proof}

We now compute the first variation of the $\mV_3$-functional~\eqref{eqn:V3}.

\begin{thm}
 \label{thm:W3_first_variation}
 Let $(M^n,g,e^{-\phi}\dvol)$ be a manifold with density and let $\tau>0$ be such that $(\phi,\tau)\in\mC_1(g)$.  Then
 \begin{multline}
  \label{eqn:W3_first_variation}
   \mV_3^\prime[\psi,t] = -\int_M \bigg\{ \left(\tau^3\cv_{3,\phi} - \frac{\tau^2}{2}\cv_{2,\phi}\right)\psi_0 - t\tau^2\bigg[\cv_{2,\phi}\left(\sigma_{1,\phi}-\frac{n}{4\tau}\right) \\ + \left\lp\cE_{2,\phi}+\frac{1}{3}\cB_\phi,\Ric_\phi\right\rp + \frac{1}{6\tau}\left\lp 4\tau\cB_\phi + \Ric,\cRic_\phi\right\rp\bigg] \bigg\}\, d\nu ,
 \end{multline}
 for all $(\psi,t)\in T_{(\phi,\tau)}\mC_1(g)$, where $\psi_0=\psi+\frac{nt}{2\tau}$.
\end{thm}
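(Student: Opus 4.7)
My approach is to differentiate under the integral sign and absorb the pointwise formula from Lemma~\ref{lem:v3_local_all_var}. Since $(d\nu)' = -\psi_0\,d\nu$ (as noted at the start of the section), the Leibniz rule yields
\[ \mV_3'[\psi,t] \;=\; \int_M \Bigl[(\tau^3\cv_{3,\phi})' \;-\; \tau^3\cv_{3,\phi}\,\psi_0\Bigr]\,d\nu, \]
and substituting~\eqref{eqn:v3_local_var} expresses the integrand as a weighted divergence, a $\psi_0$ piece $\tfrac{\tau^2}{2}\cv_{2,\phi}\psi_0$, and three $t$-dependent pieces, plus the correction $-\tau^3\cv_{3,\phi}\psi_0$ coming from differentiating the measure.

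The one nontrivial step is killing the weighted divergence. Because $\tau$, and hence $(4\pi\tau)^{-n/2}$, are spatial constants, for any smooth vector field $X$ on the compact manifold $M$ we have
\[ \int_M \tau^3\,\delta_\phi X\,d\nu \;=\; \tau^3(4\pi\tau)^{-\frac{n}{2}} \int_M \delta_\phi X\,e^{-\phi}\dvol_g \;=\; \tau^3(4\pi\tau)^{-\frac{n}{2}}\int_M \divsymb\bigl(e^{-\phi}X\bigr)\,\dvol_g \;=\; 0, \]
using the identity $\delta_\phi X = \divsymb X - \lp\nabla\phi,X\rp$. Applying this to $X = \bigl(\cT_{2,\phi}+\tfrac{1}{3}\cB_\phi\bigr)(\nabla\psi_0)$ eliminates the divergence term from~\eqref{eqn:v3_local_var}. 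Combining the two $\psi_0$ contributions then produces the coefficient $-\bigl(\tau^3\cv_{3,\phi}-\tfrac{\tau^2}{2}\cv_{2,\phi}\bigr)$ of $\psi_0$ appearing in~\eqref{eqn:W3_first_variation}.

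It remains only to tidy up the three $t$-dependent terms. The first two, namely $t\tau^2\cv_{2,\phi}\bigl(\sigma_{1,\phi}-\tfrac{n}{4\tau}\bigr)$ and $t\tau^2\bigl\lp\cE_{2,\phi}+\tfrac{1}{3}\cB_\phi,\Ric_\phi\bigr\rp$, are already in the stated form. The third is rewritten as
\[ \frac{t\tau^2}{3}\left\lp 2\cB_\phi + \frac{1}{2\tau}\Ric,\cRic_\phi\right\rp \;=\; \frac{t\tau^2}{6\tau}\left\lp 4\tau\cB_\phi + \Ric,\cRic_\phi\right\rp, \]
matching~\eqref{eqn:W3_first_variation} exactly. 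I expect no genuine obstacle: all analytic content has been packaged into Lemma~\ref{lem:v3_local_all_var} and the weighted divergence theorem on a compact manifold. The constraint $(\psi,t)\in T_{(\phi,\tau)}\mC_1(g)$ (equivalently, $\int_M\psi_0\,d\nu = 0$) only serves to ensure the variation is admissible; it is not invoked in deriving the identity itself, so formula~\eqref{eqn:W3_first_variation} in fact holds along any smooth curve in $C^\infty(M)\times(0,\infty)$.
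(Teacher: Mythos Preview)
Your proposal is correct and follows essentially the same approach as the paper: differentiate under the integral sign via the Leibniz rule, substitute the pointwise formula~\eqref{eqn:v3_local_var} from Lemma~\ref{lem:v3_local_all_var}, and integrate away the weighted divergence. The paper's proof is terser (it simply writes $\mV_3' = \int (\tau^3\cv_{3,\phi})'\,d\nu + \int \tau^3\cv_{3,\phi}\,(d\nu)'$ and then says ``the conclusion now follows from Lemma~\ref{lem:v3_local_all_var}''), whereas you have spelled out the elimination of the divergence term and the cosmetic rewriting of the $t$-terms; but there is no substantive difference.
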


\begin{proof}
 Let $\gamma\colon(-\eps,\eps)\to\mC_1(g)$ be a smooth path with $\gamma(0)=(\phi,\tau)$ and $\gamma^\prime(0)=(\psi,t)$.  Denote $\gamma(s)=\left(\phi(s),\tau(s)\right)$.  By definition,
 \[ \mV_3^\prime[\psi,t] = \left(\mV_3\left(g,\phi(s),\tau(s)\right)\right)^\prime(0) = \int_M \left(\tau^3\cv_{3,\phi}\right)^\prime d\nu + \int_M \tau^3\cv_{3,\phi}\,(d\nu)^\prime . \]
 The conclusion now follows from Lemma~\ref{lem:v3_local_all_var}.
\end{proof}

Theorem~\ref{thm:W3_first_variation} yields Theorem~\ref{thm:variational_theorem}.

\begin{proof}[Proof of Theorem~\ref{thm:variational_theorem}]
 Let $\gamma(s)=\left(\phi(s),\tau\right)$ be a smooth path in $\mC_1(g,\tau)$ with $\phi(0)=\phi$.  By Theorem~\ref{thm:W3_first_variation} and~\cite[Proposition~5.2]{Case2014sd},
 \[ \left.\frac{d}{ds}\right|_{s=0}\mW_3\left(g,\phi(s),\tau\right) = -\int_M \left[ \tau^3\cv_{3,\phi} - \frac{\tau^2}{2}\cv_{2,\phi} + \frac{\tau}{8}\cv_{1,\phi} - \frac{1}{16}\right]\psi\, d\nu , \]
 from which the conclusion readily follows.
\end{proof}

Theorem~\ref{thm:W3_first_variation} also yields the first assertion of Theorem~\ref{thm:stability_theorem}.

\begin{cor}
 \label{cor:grs_crit}
 Let $(M^n,g,e^{-\phi}\dvol)$ be a volume-normalized gradient Ricci soliton.  Then $(\phi,\tau)$ is a critical point of $\mW_3\colon\mC_1(g)\to\bR$.
\end{cor}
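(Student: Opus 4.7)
The plan is to decompose $\mW_3=\mV_3+\tfrac{1}{8}\mW_1$ and to check separately that both $\mV_3^\prime$ and $\mW_1^\prime$ vanish on $T_{(\phi,\tau)}\mC_1(g)$. The crucial structural input is that on a volume-normalized shrinking gradient Ricci soliton one has $\cRic_\phi=0$, which forces $\cB_\phi=0$ and $\cE_{2,\phi}=0$, and by Lemma~\ref{lem:grs_sigmak} makes each $\cv_{k,\phi}$ equal to the constant $\tfrac{1}{k!}(\csigma_{1,\phi})^k$.

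Specializing Theorem~\ref{thm:W3_first_variation} to this situation, every pairing involving $\cRic_\phi$, $\cB_\phi$, or $\cE_{2,\phi}$ drops out, leaving only
\[ \mV_3^\prime[\psi,t] = -\int_M \left\{\Bigl(\tau^3\cv_{3,\phi}-\tfrac{\tau^2}{2}\cv_{2,\phi}\Bigr)\psi_0 - t\tau^2\cv_{2,\phi}\Bigl(\sigma_{1,\phi}-\tfrac{n}{4\tau}\Bigr)\right\}\, d\nu. \]
Since tangent vectors to $\mC_1(g)$ satisfy $\int_M \psi_0\,d\nu=0$ and each $\cv_{k,\phi}$ is constant on the soliton, the $\psi_0$-piece drops. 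What remains is the identity $\int_M(\sigma_{1,\phi}-\tfrac{n}{4\tau})\,d\nu=0$, which I would prove by writing $\sigma_{1,\phi}=\csigma_{1,\phi}-(\phi-n)/(2\tau)$ directly from the definitions and invoking Lemma~\ref{lem:estimates}: the mean-free convention for $\phi_0=\phi-\tfrac{n}{2}-2\tau\csigma_{1,\phi}$ gives $\int_M \phi\,d\nu=\tfrac{n}{2}+2\tau\csigma_{1,\phi}$, and a direct substitution then yields $\int_M \sigma_{1,\phi}\,d\nu=\tfrac{n}{4\tau}$.

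For $\mW_1$, I would apply Lemma~\ref{lem:v1_local_all_var} in parallel fashion. Integrating $(\tau\cv_{1,\phi})^\prime\,d\nu+\tau\cv_{1,\phi}(d\nu)^\prime$, the $\tau\Delta_\phi\psi_0$ term vanishes by the weighted divergence theorem, and constancy of $\cv_{1,\phi}$ reduces the remaining $\psi_0$-contribution to a constant multiple of $\int_M\psi_0\,d\nu=0$, while the $t$-contribution reduces to a constant multiple of the same identity $\int_M(\sigma_{1,\phi}-\tfrac{n}{4\tau})\,d\nu=0$ just established. Summing, $\mW_3^\prime[\psi,t]=\mV_3^\prime[\psi,t]+\tfrac{1}{8}\mW_1^\prime[\psi,t]=0$ for every $(\psi,t)\in T_{(\phi,\tau)}\mC_1(g)$.

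The only step with any real content is the auxiliary identity $\int_M(\sigma_{1,\phi}-\tfrac{n}{4\tau})\,d\nu=0$; once it is in hand the rest is bookkeeping. I would rely on Lemma~\ref{lem:estimates} for this and expect the only pitfall to be carefully distinguishing the two normalizations $\sigma_{1,\phi}$ and $\csigma_{1,\phi}$ in order to apply the mean-free identity correctly.
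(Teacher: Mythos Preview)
Your argument is correct. The paper's own proof is much terser: it simply notes that $\cB_\phi=0$ on a gradient Ricci soliton, so $\cv_{k,\phi}=\csigma_{k,\phi}$ for $k\in\{1,2,3\}$, which are all constant by Lemma~\ref{lem:grs_sigmak}, and then it invokes Theorem~\ref{thm:variational_theorem}. Strictly speaking, Theorem~\ref{thm:variational_theorem} only characterizes critical points within $\mC_1(g,\tau)$ (fixed $\tau$), so the paper leaves the $\tau$-direction implicit. What it presumably has in mind is precisely the identity you isolate, which on the soliton can be written pointwise as $\sigma_{1,\phi}-\tfrac{n}{4\tau}=-\tfrac{1}{2\tau}\phi_0$ and hence has $d\nu$-mean zero because $\phi_0$ is an eigenfunction of $-\Delta_\phi$ with nonzero eigenvalue (this is how Lemma~\ref{lem:estimates} makes $\phi_0$ mean-free; it is not asserted directly). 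Your decomposition $\mW_3=\mV_3+\tfrac{1}{8}\mW_1$, combined with Theorem~\ref{thm:W3_first_variation} and Lemma~\ref{lem:v1_local_all_var}, makes this step explicit, and the paper itself later leans on exactly this observation when it says ``by adapting the proof of Corollary~\ref{cor:grs_crit}'' in the proof of Theorem~\ref{thm:V3_second_var} to conclude $\mV_3'=0$. So your route is essentially the honest version of the paper's route.
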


\begin{proof}
 Since the weighted Bach tensor vanishes for any gradient Ricci soliton, we have $\cv_{k,\phi}=\csigma_{k,\phi}$ for $k\in\{1,2,3\}$.  Thus, by Lemma~\ref{lem:grs_sigmak}, we see that $\cv_{k,\phi}$ is constant for $k\in\{1,2,3\}$.  The conclusion now follows from Theorem~\ref{thm:variational_theorem}.
\end{proof}
\section{The second variation of $\mW_3$}
\label{sec:stable}

In order to prove Theorem~\ref{thm:stability_theorem}, we first compute the second variation of the $\mV_3$-functional at a volume-normalized shrinking gradient Ricci soliton.

\begin{thm}
 \label{thm:V3_second_var}
 Let $(M^n,g,e^{-\phi}\dvol)$ be a volume-normalized shrinking gradient Ricci soliton.  Then
 \begin{equation}
  \label{eqn:V3_second_var} \mV_3^{\prime\prime}[\psi,t] \geq \left(\tau^2\cv_{2,\phi}-\frac{\tau}{2}\cv_{1,\phi}\right)\int_M \left[ \tau\left|\nabla\psi_1\right|^2 - \frac{1}{2}\psi_1^2 + \frac{1}{2}\left(c-\frac{t}{\tau}\right)^2\phi_0^2 \right] d\nu ,
 \end{equation}
 where $\psi_1=\psi_0+c\phi_0$ for $\psi_0=\psi+\frac{nt}{2\tau}$ and $c$ such that $\int\psi_1\phi_0\,d\nu=0$.  Moreover, equality holds in~\eqref{eqn:V3_second_var} if and only if $(M^n,g,e^{-\phi}\dvol)$ is isometric to a shrinking Gaussian or $t=0$.
\end{thm}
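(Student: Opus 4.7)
The plan is to compute $\mV_3''[\psi,t]$ by differentiating the first-variation formula of Theorem~\ref{thm:W3_first_variation} (specialized to $\mV_3$ by dropping the $\tau\cv_{1,\phi}/8$ contribution). On the soliton, the same reasoning as in Corollary~\ref{cor:grs_crit} shows $\mV_3'[\psi,t]=0$, so the second variation is intrinsic on $T_{(\phi,\tau)}\mC_1(g)$. I therefore work with the convenient straight curve $\gamma(s) = (\phi+s\psi,\tau+st)$, corrected by a second-order $\phi$-perturbation $\tilde\gamma(s) = (\phi+s\psi+\tfrac{s^2}{2}\beta,\tau+st)$ to enforce the unit-volume constraint. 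Expanding the constraint to $O(s^2)$ imposes $\int\beta\, d\nu = \int\psi_0^2\, d\nu + \tfrac{nt^2}{2\tau^2}$, and since $(\phi,\tau)$ is critical, Taylor expansion gives
\[\mV_3''[\psi,t] = \tfrac{d^2}{ds^2}\big|_{s=0}\mV_3(\gamma(s)) + \mV_3'[\beta,0] = \tfrac{d^2}{ds^2}\big|_{s=0}\mV_3(\gamma(s)) - \Phi_0\Big(\int\psi_0^2\, d\nu + \tfrac{nt^2}{2\tau^2}\Big),\]
where $\Phi_0 := \tau^3\cv_{3,\phi} - \tfrac{\tau^2}{2}\cv_{2,\phi}$ is constant on the soliton (Lemma~\ref{lem:grs_sigmak}).

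For the straight-curve piece I apply Lemmas~\ref{lem:v2_local_all_var} and~\ref{lem:v3_local_all_var}, exploiting the simplifications $\cRic_\phi=0$, $\cB_\phi=\cE_{2,\phi}=0$, $\cT_{2,\phi}=\cv_{2,\phi}g$, and $\cv_{k,\phi}=\csigma_{1,\phi}^k/k!$. The identity $\sigma_{1,\phi}-\tfrac{n}{4\tau} = -\tfrac{\phi_0}{2\tau}$ follows immediately from $\phi_0 := \phi - \tfrac{n}{2} - 2\tau\csigma_{1,\phi}$, and allows the derivative of $\Phi(s) := \tau(s)^3\cv_{3,\phi(s)} - \tfrac{\tau(s)^2}{2}\cv_{2,\phi(s)}$ along $\gamma$ to be written as $\Phi'(0) = K\bigl(\tau\Delta_\phi\psi_0 + \tfrac{1}{2}\psi_0 - \tfrac{t}{2\tau}\phi_0\bigr)$, where $K := \tau^2\cv_{2,\phi} - \tfrac{\tau}{2}\cv_{1,\phi}$ is the prefactor in~\eqref{eqn:V3_second_var}. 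The $t$-dependent piece $Q'(0)$ requires more care: although $\cB_\phi$ vanishes on the soliton, its $\tau$-variation contributes $(\cB_\phi)'(0) = \tfrac{t}{2\tau^2}\Ric$ via the identity $\tau B_\phi = \tau\cB_\phi + \tfrac{1}{2}\Ric$ from Lemma~\ref{lem:v3_local_all_var}. Using the soliton identities $\nabla R = 2\Ric(\nabla\phi)$ and $R+\Delta\phi = \tfrac{n}{2\tau}$ together with $-\Delta_\phi\phi_0 = \phi_0/\tau$ (Lemma~\ref{lem:estimates}), the main integrals simplify: in particular, $\int\Ric^{ij}\nabla_i\nabla_j\psi\, d\nu = 0$ after integration by parts, and $\int R\, d\nu = \tfrac{n}{2\tau} - \tfrac{1}{\tau}\int\phi_0^2\, d\nu$.

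Assembling the contributions and expanding the target using $\int\psi_1\phi_0\, d\nu = 0$ together with $\int|\nabla\phi_0|^2\, d\nu = \tfrac{1}{\tau}\int\phi_0^2\, d\nu$, the difference between $\mV_3''[\psi,t]$ and the right-hand side of~\eqref{eqn:V3_second_var} collapses to
\[\frac{t^2}{4}\Big(\cv_{2,\phi} - \frac{\cv_{1,\phi}}{\tau} + \frac{1}{3\tau^2}\Big)\Big(n - 2\int_M \phi_0^2\, d\nu\Big).\]
The factor $n - 2\int\phi_0^2\, d\nu \geq 0$ by Lemma~\ref{lem:estimates}, with equality iff the soliton is a Gaussian; and since $\csigma_{1,\phi}\leq 0$ by Lemma~\ref{lem:grs_sigmak}, the other factor equals $\tfrac{1}{6\tau^2}\bigl(3(\tau\csigma_{1,\phi})^2 - 6\tau\csigma_{1,\phi} + 2\bigr) \geq \tfrac{1}{3\tau^2} > 0$. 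This proves the inequality and pinpoints the equality cases. The main obstacle is tracking the $\tau$-variation of $\cB_\phi$ and including the second-order correction $\beta$: dropping either produces spurious $\int\psi_0^2$-contributions of the wrong sign, which would falsify the $t=0$ equality case.
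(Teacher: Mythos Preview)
Your proposal is correct and follows essentially the same path as the paper: both differentiate the first-variation formula (Theorem~\ref{thm:W3_first_variation}) using Lemmas~\ref{lem:v1_local_all_var}--\ref{lem:v3_local_all_var} at the soliton, and both rely on Lemma~\ref{lem:estimates} for the inequality and the equality characterization. The only organizational difference is that the paper splits $\mV_3'$ into four pieces $A,B,C,D$ and applies the estimate $\int\phi_0^2\,d\nu\le n/2$ separately to $B'(0)$, $C'(0)$, $D'(0)$, whereas you compute the exact second variation (handling the constraint via the explicit second-order correction $\beta$) and collect the remainder into the single factor $\tfrac{t^2}{4}\bigl(\cv_{2,\phi}-\cv_{1,\phi}/\tau+1/(3\tau^2)\bigr)\bigl(n-2\int\phi_0^2\,d\nu\bigr)$ before applying the estimate once; summing the paper's three discarded nonnegative terms from~\eqref{eqn:Bprime}, \eqref{eqn:Cprime}, \eqref{eqn:Dprime} reproduces exactly your remainder, so the two computations are the same up to bookkeeping.
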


\begin{proof}
 Let $\gamma\colon(-\eps,\eps)\to C^\infty(M)\times(0,\infty)$ be a smooth path with $\gamma(0)=(\phi,\tau)$ and $\frac{\partial\gamma}{\partial s}=(\psi,t)$.  Define functions $A,B,C,D\colon (-\eps,\eps)\to\bR$ as follows:
 \begin{align*}
  A(s) & = -\int_M \left(\tau^3\cv_{3,\phi}-\frac{\tau}{2}\cv_{2,\phi}\right)\psi_0\,d\nu , \\
  B(s) & = \int_M \frac{t}{\tau}\left(\tau^2\cv_{2,\phi}\right)\left(\tau\sigma_{1,\phi}-\frac{n}{4}\right)d\nu , \\
  C(s) & = \int_M t\left\lp \tau^2\cE_{2,\phi} + \frac{\tau^2}{3}\cB_\phi, \Ric_\phi\right\rp d\nu , \\
  D(s) & = \int_M \frac{t}{6}\left\lp 4\tau\cB_\phi + \Ric, \tau\cRic_\phi\right\rp d\nu .
 \end{align*}
 Since $\left(\mV\left(g,\phi(s),\tau(s)\right)\right)^\prime=\left(A+B+C+D\right)(s)$, it suffices to compute the derivatives of $A,B,C$ and $D$ at $s=0$.  This task is made simpler by combining Lemma~\ref{lem:grs_sigmak}, Lemma~\ref{lem:v1_local_all_var}, Lemma~\ref{lem:v2_local_all_var} and Lemma~\ref{lem:v3_local_all_var} to conclude that
 \begin{equation}
  \label{eqn:grs_var_vk}
  \left(\tau^k\cv_{k,\phi}\right)^\prime(0) = \tau^{k-1}\cv_{k-1,\phi}\left(\tau\Delta_\phi + \frac{1}{2}\right)\psi_0 - \frac{t\tau^{k-2}}{2}\cv_{k-1,\phi}\phi_0
 \end{equation}
 for $k\in\{1,2,3\}$.  Furthermore, \cite[Corollary~8.4]{Case2014sd} implies that
 \begin{equation}
  \label{eqn:grs_var_s1}
  \left(\tau\sigma_{1,\phi}-\frac{n}{4}\right)^\prime(0) = \tau\Delta_\phi\psi_0 - \frac{t}{2\tau}\phi_0 + \frac{nt}{4\tau}
 \end{equation}
 and direct computation yields
 \begin{align}
  \label{eqn:grs_var_cRic} \left(\tau\cRic_\phi\right)^\prime(0) & = \tau\nabla^2\psi_0 + \frac{t}{2\tau}g , \\
  \label{eqn:grs_var_cE2} \left(\tau^2\cE_{2,\phi}\right)^\prime(0) & = -\tau^2\cv_{1,\phi}\nabla^2\psi_0 - \frac{t}{2}\cv_{1,\phi}g , \\
  \label{eqn:grs_var_cB} \left(\tau^2\cB_\phi\right)^\prime(0) & = -\frac{t}{2}\nabla^2\phi_0 + \frac{t}{4\tau}g .
 \end{align}

 By adapting the proof of Corollary~\ref{cor:grs_crit}, we see that $(\phi,\tau)$ is a critical point of $\mV_3$.  Hence
 \[ A^\prime(0) = -\int_M \left(\tau^3\cv_{3,\phi}-\frac{\tau}{2}\cv_{2,\phi}\right)^\prime(0)\,\psi_0\,d\nu . \]
 Therefore, by Lemma~\ref{lem:grs_sigmak} and~\eqref{eqn:grs_var_vk},
 \begin{equation}
  \label{eqn:Aprime}
  A^\prime(0) = \left(\tau^2\cv_{2,\phi}-\frac{\tau}{2}\cv_{1,\phi}\right)\int_M \left[ \tau\lv\nabla\psi_0\rv^2 - \frac{1}{2}\psi_0^2 + \frac{t}{2\tau}\phi_0\psi_0\right]d\nu .
 \end{equation}
 
 Using Lemma~\ref{lem:grs_sigmak} and Lemma~\ref{lem:estimates}, we observe that
 \begin{multline*}
  B^\prime(0) = -\frac{t}{2\tau}\int_M \left(\tau^2\cv_{2,\phi}\right)^\prime(0)\,\phi_0\,d\nu \\ + t\tau\cv_{2,\phi}\int_M \left(\tau\sigma_{1,\phi}-\frac{n}{4}\right)^\prime(0)\,d\nu - \frac{t\tau}{2}\cv_{2,\phi} \int_M \phi_0\,(d\nu)^\prime(0) .
 \end{multline*}
 From~\eqref{eqn:grs_var_vk} and \eqref{eqn:grs_var_s1} we compute that
 \[ B^\prime(0) = \int_M \left[ \frac{t}{2}\left(\tau\cv_{2,\phi}+\frac{1}{2}\cv_{1,\phi}\right)\psi_0\phi_0 + \frac{t^2}{4\tau}\cv_{1,\phi}\phi_0^2 + \frac{nt^2}{4}\cv_{2,\phi} \right]d\nu . \]
 Applying Lemma~\ref{lem:grs_sigmak} and Lemma~\ref{lem:estimates} yields
 \begin{equation}
  \label{eqn:Bprime}
  B^\prime(0) \geq \left(\frac{t\tau}{2}\cv_{2,\phi} + \frac{t}{4}\cv_{1,\phi}\right) \int_M \left[ \psi_0\phi_0 + \frac{t}{\tau}\phi_0^2 \right]d\nu
 \end{equation}
 with equality if and only if $(M^n,g,e^{-\phi}\dvol)$ is isometric to a shrinking Gaussian or $t=0$.

 Using first~\eqref{eqn:grs_var_cE2} and~\eqref{eqn:grs_var_cB} and second~\eqref{eqn:sigma1_estimate}, we compute that
 \begin{align*}
  C^\prime(0) & = -\frac{t}{2\tau}\int_M \tr\left( \tau^2\cv_{1,\phi}\nabla^2\psi_0 + \frac{t}{2}\cv_{1,\phi}g + \frac{t}{6}\nabla^2\phi_0 - \frac{t}{12\tau}g\right)d\nu \\
  & = -\frac{t}{2\tau}\int_M \left[ \tau\cv_{1,\phi}\psi_0\phi_0 + \frac{nt}{2}\cv_{1,\phi} + \frac{t}{6\tau}\phi_0^2 - \frac{nt}{12\tau} \right]d\nu .
 \end{align*}
 Lemma~\ref{lem:grs_sigmak} and Lemma~\ref{lem:estimates} then yield
 \begin{equation}
  \label{eqn:Cprime}
  C^\prime(0) \geq -\frac{t}{2}\cv_{1,\phi}\int_M \left[ \psi_0\phi_0 + \frac{t}{\tau}\phi_0^2 \right]d\nu
 \end{equation}
 with equality if and only if $(M^n,g,e^{-\phi}\dvol)$ is isometric to a shrinking Gaussian or $t=0$.

 From~\eqref{eqn:grs_var_cRic} we compute that
 \[ D^\prime(0) = \frac{t}{6}\int_M \left[ \frac{1}{2}\lp\nabla\psi_0,\nabla\phi_0\rp - \tau\lp\nabla^2\psi_0,\nabla^2\phi_0\rp - \frac{t}{2\tau^2}\phi_0^2 + \frac{nt}{4\tau^2} \right] d\nu . \]
 Recalling that $\delta_\phi\nabla^2u=d\Delta_\phi u + \Ric_\phi(\nabla u)$ for all $u\in C^\infty(M)$, we compute using Lemma~\ref{lem:estimates} that
 \[ \int_M \lp\nabla^2\psi_0,\nabla^2\phi_0\rp\,d\nu = \frac{1}{2\tau}\int_M \lp\nabla\psi_0,\nabla\phi_0\rp\,d\nu . \]
 Inserting this into the previous display and using Lemma~\ref{lem:estimates} again yields
 \begin{equation}
  \label{eqn:Dprime}
  D^\prime(0) \geq 0
 \end{equation}
 with equality if and only if $(M^n,g,e^{-\phi}\dvol)$ is isometric to a shrinking Gaussian or $t=0$.
 
 Now, combining~\eqref{eqn:Aprime}, \eqref{eqn:Bprime}, \eqref{eqn:Cprime} and~\eqref{eqn:Dprime} yields
 \[ \mV_3^{\prime\prime}[\psi,\tau] \geq \left(\tau^2\cv_{2,\phi} - \frac{\tau}{2}\cv_{1,\phi}\right)\int_M \left[ \tau\lv\nabla\psi_0\rv^2 - \frac{1}{2}\psi_0^2 + \frac{t}{\tau}\psi_0\phi_0 + \frac{t^2}{2\tau^2}\phi_0^2 \right] d\nu \]
 with equality if and only if $(M^n,g,e^{-\phi}\dvol)$ is isometric to a shrinking Gaussian or $t=0$.  The final conclusion follows from the observation that
 \begin{align*}
  \int_M \lv\nabla\psi_0\rv^2\,d\nu & = \int_M \left[ \lv\nabla\psi_1\rv^2 + \frac{c^2}{\tau}\phi_0^2\right] d\nu , \\
  \int_M \psi_0^2\,d\nu & = \int_M \left[ \psi_1^2 + c^2\phi_0^2 \right]d\nu . \qedhere
 \end{align*}
\end{proof}

\begin{proof}[Proof of Theorem~\ref{thm:stability_theorem}]
 It follows from Theorem~\ref{thm:V3_second_var} and the proof of~\cite[Theorem~9.2]{Case2014sd} that
 \[ \mW_3^{\prime\prime}[\psi,\tau] \geq \left(\tau^2\cv_{2,\phi} - \frac{\tau}{2}\cv_{1,\phi} + \frac{1}{8}\right)\int_M \left[ \tau\lv\nabla\psi_1\rv^2 - \frac{1}{2}\psi_1^2 + \frac{1}{2}\left(c-\frac{t}{\tau}\right)^2\phi_0^2 \right] d\nu \]
 with equality if and only if $(M^n,g,e^{-\phi}\dvol)$ is isometric to a shrinking Gaussian or $t=0$.  By Lemma~\ref{lem:grs_sigmak}, we see that $\tau^2\cv_{2,\phi}-\frac{\tau}{2}\cv_{1,\phi}+\frac{1}{8}>0$.  Hence $\mW_3^{\prime\prime}[\psi,\tau]\geq0$ with equality if and only if
 \begin{enumerate}
  \item $(M^n,g,e^{-\phi}\dvol)$ is isometric to a shrinking Gaussian, $c-\frac{t}{\tau}=0$ and $\int\lv\nabla\psi_1\rv^2\,d\nu=\frac{1}{2\tau}\int\psi_1^2\,d\nu$, or
  \item $t=0$, $\int\psi\phi_0\,d\nu=0$ and $\int\lv\nabla\psi\rv^2\,d\nu=\frac{1}{2\tau}\int\psi^2\,d\nu$.
 \end{enumerate}
 The conclusion follows from a straightforward application of Lemma~\ref{lem:grs_obata}.
\end{proof}


\bibliographystyle{abbrv}
\bibliography{../bib}
\end{document}